\newif{\ifarxiv}
\newtheorem{theorem}{Theorem}[section]
\newtheorem{lemma}[theorem]{Lemma}
\newtheorem{corollary}[theorem]{Corollary}
\theoremstyle{definition}
\newtheorem{definition}[theorem]{Definition}
\numberwithin{equation}{section}
\newcommand\Open{{\mathcal O}}
\newcommand{\interior}[1]{int ({#1})} 
\newcommand\real{\mathbb{R}}
\newcommand\Rp{\real_+}
\newcommand\creal{\overline\real_+}
\newcommand\nat{\mathbb{N}}
\newcommand\pow{\mathbb{P}}
\newcommand\Pfin{\pow_{\text{fin}}}
\newcommand\dc{\mathop{\downarrow}}
\newcommand\upc{\mathop{\uparrow}}
\newcommand\uuarrow{\rlap{$\uparrow$}\raise.5ex\hbox{$\uparrow$}}
\newcommand\ddarrow{\rlap{$\downarrow$}\raise.5ex\hbox{$\downarrow$}}
\newcommand\eqdef{\mathrel{\buildrel \text{def}\over=}}
\newcommand\diff{\smallsetminus}
\newcommand\Val{\mathbf V}
\newcommand\wk{\mathrm{w}}
\newcommand\pw{\mathrm{p}}
\newcommand\fin{\mathrm{f}}
\newcommand\ugame[1]{\mathfrak{u}_{#1}}
\begin{document}

\ifarxiv
\relax
\else
\noindent                                             
\begin{picture}(150,36)                               
\put(5,20){\tiny{Submitted to}}                       
\put(5,7){\textbf{Topology Proceedings}}              
\put(0,0){\framebox(140,34){}}                        
\put(2,2){\framebox(136,30){}}                        
\end{picture}                                        
\vspace{0.5in}
\fi

\renewcommand{\bf}{\bfseries}
\renewcommand{\sc}{\scshape}
\vspace{0.5in}

\title{Probabilistic Powerdomains and Quasi-Continuous Domains}

\newcommand\me{Jean Goubault-Larrecq}
\newcommand\where{Universit\'e Paris-Saclay, ENS Paris-Saclay, CNRS, 
  LSV, 91190, Gif-sur-Yvette, France.}
\newcommand\eml{goubault@lsv.fr}
\ifarxiv
\author[$\dagger$]{\me}
\affil[$\dagger$]{\where \quad \texttt{\eml}}
\else
\author{\me}
\address{\where}
\email{\eml}
\fi

\ifarxiv
\relax
\else
\subjclass[2020]{Primary
54G99; 
secondary 
28E99. 
}

\keywords{Probabilistic powerdomain, quasi-continuous domain, locally
  finitary compact space, Scott topology, weak topology}
\fi

\newcommand\abs{%
  The probabilistic powerdomain $\Val X$ on a space $X$ is the space
  of all continuous valuations on $X$.  We show that, for every
  quasi-continuous domain $X$, $\Val X$ is again a quasi-continuous
  domain, and that the Scott and weak topologies then agree on
  $\Val X$.  This also applies to the subspaces of probability and
  subprobability valuations on $X$.  We also show that the Scott and
  weak topologies on the $\Val X$ may differ when $X$ is not
  quasi-continuous, and we give a simple, compact Hausdorff
  counterexample.
}

\maketitle

\begin{abstract}
  \abs
\end{abstract}

\section{\bf Introduction}
\label{sec:intro}

Continuous valuations are an alternative to measures, which are
popular in computer science, and notably in the semantics of
programming languages \cite{jones89,Jones:proba}.  The space of all
continuous valuations on a topological space $X$ is called the
probabilistic powerdomain $\Val X$ on $X$.  It is known that the
probabilistic powerdomain of a directed-complete partial order (dcpo)
is a dcpo again, in short, $\Val$ preserves dcpos; similarly, $\Val$
preserves continuous dcpos, but fails to preserve complete lattices
and bc-domains.  All that was proved by Jones
\cite{jones89,Jones:proba}.  It is unknown whether $\Val$ preserves
RB-domains or FS-domains, except in special cases
\cite{JT:troublesome}.  On the positive side, $\Val$ preserves stably
compact spaces \cite{JT:troublesome,AMJK:scs:prob}, QRB-domains
\cite{JGL:qrb,GLJ:QRB=QFS}, and coherent quasi-continuous dcpos
\cite{LK:prob}.  (The latter two results are equivalent, since
QRB-domains coincide with coherent quasi-continuous dcpos
\cite{LX:QRB=QFS,GLJ:QRB=QFS}, and also with Li and Xu's QFS-domains
\cite{LiXu:QFS}.)

Lyu and Kou \cite{LK:prob} asked whether coherence was required, in
other words, whether $\Val$ preserves quasi-continuous, not
necessarily coherent, dcpos.  We show that this is indeed the case,
and that, in this case, the Scott and weak topologies agree on the
probabilistic powerdomain.  We show this in
Section~\ref{sec:bf-main-theorem}, after a few preliminaries: general
preliminaries in Section~\ref{sec:bf-preliminaries}, some required
material due to Heckmann on so-called point-continuous valuations in
Section~\ref{sec:simple-valuations}, and a useful lemma on capacities
in Section~\ref{sec:capacities}.  We refine the result and we handle
the case of probability continuous valuations in
Section~\ref{sec:case-prob-cont}.

Since every continuous dcpo is quasi-continuous, the coincidence of
the Scott and weak topologies on $\Val X$, where $X$ is
quasi-continuous, generalizes a result of Kirch
\cite[Satz~8.6]{Kirch:bewertung}, see also
\cite[Satz~4.10]{Tix:bewertung}, according to which the Scott and weak
topologies on $\Val X$ agree for every continuous dcpo $X$.
Alvarez-Manilla, Jung and Keimel asked whether they agree on
$\Val_{\leq 1} X$ for every stably compact space $X$ \cite[Section~5,
second open problem]{AMJK:scs:prob}.  We will show that this is not
the case, through a simple, compact Hausdorff example in
Section~\ref{sec:scott-weak-topol}.  Hence the situation with
quasi-continuous domains is probably rather exceptional.

\section{\bf Preliminaries}
\label{sec:bf-preliminaries}

We refer to \cite{GHKLMS:contlatt,JGL-topology} on domain theory and
point-set topology, specially non-Hausdorff topology.  Compactness
does not involve separation.

A \emph{dcpo} (directed-complete partial order) is a poset $P$ in
which every directed family $D$ has a supremum $\sup D$.  A
\emph{Scott-open subset} of $P$ is a subset $U$ that is
\emph{upwards-closed} (for every $x \in U$ and every $y$ such that
$x \leq y$, $y$ is in $U$) and is such that, for every directed family
$D$ in $P$, if $\sup D \in U$ then $D$ intersects $U$.  The
Scott-open subsets of $P$ form a topology called the \emph{Scott
  topology}.

Every complete lattice is a dcpo.  For example,
$\creal \eqdef \Rp \cup \{\infty\}$, with the usual ordering that
places $\infty$ above all non-negative real numbers, is a dcpo.  The
family of open subsets $\Open X$ of a topological space is a dcpo,
too.

A \emph{Scott-continuous map} $f \colon P \to Q$ between dcpos is a
monotonic map that preserves suprema of directed families.  A map from
$P$ to $Q$ is Scott-continuous if and only if it is continuous with
respect to the Scott topologies on $P$ and $Q$.

A \emph{valuation} $\nu$ on a topological space $X$ is a strict,
modular, monotonic map from $\Open X$ to $\creal$.  That $\nu$ is
\emph{strict} means that $\nu (\emptyset)=0$.  That it is
\emph{modular} means that
$\nu (U) + \nu (V) = \nu (U \cup V) + \nu (U \cap V)$ for all open
subsets $U$ and $V$.  A \emph{continuous valuation} is a valuation
that is Scott-continuous from $\Open X$ to $\creal$.

Continuous valuations and measures are close cousins.  Every
$\tau$-smooth Borel measure defines a continuous valuation by
restricting it to $\Open X$; and every Borel measure on a hereditary
Lindel\"of space is $\tau$-smooth \cite{Adamski:measures}.  Conversely,
every continuous valuation on an LCS-complete space extends to a
measure on the Borel $\sigma$-algebra
\cite[Theorem~1.1]{dBGLJL:LCS}---an \emph{LCS-complete} space is any
subspace obtained as a $G_\delta$ subset of a locally compact sober
space.


We write $\Val X$ for the dcpo of all continuous valuations on $X$,
ordered by the \emph{stochastic ordering}: $\mu \leq \nu$ if and only
if $\mu (U) \leq \nu (U)$ for every $U \in \Open X$.
$\Val_{\leq 1} X$ (resp., $\Val_1 X$) is the subdcpo of all
\emph{subprobability} (resp., \emph{probability}) continuous
valuations $\nu$, namely those such that $\nu (X) \leq 1$ (resp.,
$\nu (X)=1$).  We will usually write $\Val_\ast X$ to denote any of
those dcpos, where $\ast$ stands for nothing, ``$\leq 1$'', or
``$1$''.

The \emph{weak topology} on $\Val_\ast X$ is the coarsest one that
makes
$[r \ll U]_\ast \eqdef \{\nu \in \Val_\ast X \mid r \ll \nu (U)\}$
open for every $r \in \Rp$ and every $U \in \Open X$.  Here $\ll$ is
the so-called way-below relation on $\creal$; we have $r \ll s$ if and
only if $r=0$ or $r < s$.  The sets $[U > r]_\ast$ with
$U \in \Open X$ and $r \in \Rp \diff \{0\}$ form another subbase of
the weak topology, since $[U > r]_\ast = [r \ll U]_\ast$ if
$r \neq 0$, and $[0 \ll U]_\ast = \Val_\ast X$.  We write
$\Val_{\ast,\wk} X$ for $\Val_\ast X$ with the weak topology.  The
weak topology is coarser than the Scott topology of the stochastic
ordering $\leq$.

Every topological space $X$ has a \emph{specialization preordering}
$\leq$, defined by $x \leq y$ if and only if every open neighborhood
of $x$ contains $y$.  A $T_0$ space is one such that $\leq$ is an
ordering.  As examples, for every dcpo $P$, ordered by $\leq$, the
specialization preordering of $P$ with its Scott topology is $\leq$;
and the specialization preordering of $\Val_\ast X$ is the stochastic
ordering.

For every point $x \in X$, the closure of $\{x\}$ coincides with the
downward closure $\dc x \eqdef \{y \in X \mid y \leq x\}$ of $x$ in
the specialization preordering.  In general, we write $\dc A$ for the
downward closure of a set $A$, so that $\dc x = \dc \{x\}$.

A subset $A$ of a space $X$ is \emph{saturated} if and only if it is
equal to the intersection of its open neighborhoods, equivalently if
it is upwards-closed with respect to the specialization preordering
$\leq$.  We write $\upc A$ for the upward closure of $A$.

For every compact subset $K$ of $X$, $\upc K$ is compact saturated.
This is the case in particular if $K$ is finite: we call the sets of
the form $\upc E$, with $E$ finite, \emph{finitary compact}.  A space
$X$ is \emph{locally finitary compact} if and only if it has a base
consisting of interiors $\interior {\upc E}$ of finitary compact sets.

The standard definition of a quasi-continuous dcpo is through the
notion of a so-called way-below relation between finite subsets.  We
will instead use the following characterization
\cite[Exercise~8.3.39]{JGL-topology}: the quasi-continuous dcpos are
exactly the locally finitary compact, sober spaces.  Notably, every
locally finitary compact, sober space is a quasi-continuous dcpo in
its specialization ordering $\leq$; also, the topology is exactly the
Scott topology of $\leq$.

We have mentioned sober spaces a few times already.  A closed subset
$C$ of a space $X$ is \emph{irreducible} if and only if it is
non-empty and, for all closed subsets $C_1$ and $C_2$ of $X$, if
$C \subseteq C_1 \cup C_2$ then $C \subseteq C_1$ or
$C \subseteq C_2$.  The closures $\dc x$ of points are always
irreducible closed.  A \emph{sober space} is any $T_0$ space in which
the only irreducible closed subsets are closures of points.  $\creal$
is sober in its Scott topology.  Every quasi-continuous dcpo is sober
in its Scott topology (by our definition), every Hausdorff space is
sober; also, $\Val_\wk X$ is sober for every space $X$
\cite[Proposition~5.1]{heckmann96}.

The sober subspaces $Y$ of a sober space $X$ are exactly those that
are closed in the so-called \emph{Skula}, or \emph{strong} topology on
$X$ \cite[Corollary~3.5]{KL:dtopo}.  That topology is the coarsest one
that contains both the original open and the original closed sets as
open sets.  We note that $\Val_{\leq 1,\wk} X$ is closed in
$\Val_\wk X$, being the complement of $[X > 1]$.  Every closed set is
Skula-closed, so $\Val_{\leq 1, \wk} X$ is also a sober space.  Also,
$\Val_{1,\wk} X$ is the intersection of the closed set
$\Val_{\leq 1,\wk} X$ with the open sets $[X > 1-\epsilon]$,
$\epsilon > 0$, hence is also Skula-closed and therefore sober as
well.

The forgetful functor from the category of sober spaces and continuous
maps to the category of topological spaces has a left adjoint called
\emph{sobrification}.  Explicitly, this means that every topological
space $X$ has a sobrification $X^s$, which is a sober topological
space; there is a continuous map $\eta_X \colon X \to X^s$, called the
\emph{unit}; and every continuous map $f \colon X \to Y$ where $Y$ is
sober extends to a unique continuous map $\hat f \colon X^s \to Y$, in
the sense that $\hat f \circ \eta_X = f$.  Concretely, $X^s$ can be
realized as the space of all irreducible closed subsets of $X$, with a
suitable topology, and $\eta_X (x) \eqdef \dc x$.  By Proposition~3.4
of \cite{KL:dtopo}, given any subspace $Y$ of a sober space $X$, the
Skula-closure $cl_s (Y)$ of $Y$ in $X$ is also a sobrification of $Y$,
with $\eta_Y$ defined as the inclusion map.  In general, for a $T_0$
space $Y$, and a sober space $X$, together with a continuous map
$f \colon Y \to X$, $X$ is a sobrification of $Y$ with unit $f$ if and
only if $f$ is a topological embedding, with Skula-dense image
\cite[Proposition~3.2]{KL:dtopo}.

\section{\bf Simple and point-continuous valuations}
\label{sec:simple-valuations}

Among all the continuous valuations that exist on a space $X$, the
\emph{simple valuations} are those of the form $\sum_{x \in A} a_x
\delta_x$, where $A$ is a finite subset of $X$, $a_x \in \Rp$, and
$\delta_x$ is the Dirac mass, defined by $\delta_x (U) \eqdef 1$ if $x
\in U$, $0$ otherwise.  We let $\Val_{\ast,\fin} X$ be the subspace of
$\Val_{\ast,\wk} X$ that consists of its simple valuations.

Heckmann characterized the sobrification of $\Val_\fin X$ as being the
space $\Val_\pw X$ of so-called \emph{point-continuous valuations} on
$X$ \cite[Theorem~5.5]{heckmann96}, together with inclusion as unit.
Those are the valuations $\nu$ on $X$ that are continuous from
$\Open_\pw X$ to $\creal$.  $\Open_\pw X$ is the lattice of open
subsets of $X$ with the point topology, namely the coarsest topology
that makes $\{U \in \Open X \mid x \in U\}$ open for every point
$x \in X$.  We write $\Val_{\ast,\pw} X$ for the usual variants.
\begin{lemma}
  \label{lemma:VpX:dense}
  Let $X$ be a topological space.  $\Val_\fin X$ is Skula-dense in
  $\Val_\pw X$.
\end{lemma}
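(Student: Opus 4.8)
The plan is to exploit Heckmann's result that $\Val_\pw X$ is the sobrification of $\Val_\fin X$ with inclusion as unit, combined with the characterization of sobrification units recalled at the end of Section~\ref{sec:bf-preliminaries}: a $T_0$ space $Y$ sits inside a sober space $X$ as a sobrification (with inclusion as unit) if and only if $Y$ is a topological subspace of $X$ and its image is Skula-dense. Since Heckmann tells us that $\Val_\pw X$ \emph{is} a sobrification of $\Val_\fin X$ with the inclusion map as unit, that very characterization forces the inclusion $\Val_\fin X \hookrightarrow \Val_\pw X$ to have Skula-dense image; that is precisely the assertion that $\Val_\fin X$ is Skula-dense in $\Val_\pw X$.

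So the first step is to state Heckmann's theorem \cite[Theorem~5.5]{heckmann96} in exactly the form ``$\Val_\pw X$ is a sobrification of $\Val_\fin X$, with unit the inclusion map'', checking that the inclusion really is the map Heckmann uses (it is: a simple valuation is in particular point-continuous, and the subspace topology on $\Val_\fin X$ inherited from $\Val_{\pw} X$ is the weak topology $\Val_{\fin,\wk} X$, since the subbasic sets $[r \ll U]$ restrict correctly). The second step is to invoke \cite[Proposition~3.2]{KL:dtopo}: because $\Val_\pw X$ is sober and is a sobrification of the $T_0$ space $\Val_\fin X$ with unit the inclusion, that inclusion is a topological embedding with Skula-dense image. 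The third step is simply to read off the conclusion. I should also note in passing that $\Val_\pw X$ is indeed sober and $\Val_\fin X$ is $T_0$, both of which follow from $\Val_{\pw} X$ being a subspace of the sober space $\Val_\wk X$ (sober by \cite[Proposition~5.1]{heckmann96}) together with Heckmann's identification of $\Val_\pw X$ as a sobrification.

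The main obstacle is not conceptual but bookkeeping: one must be careful that the topology on $\Val_\fin X$ that Heckmann's theorem refers to coincides with the subspace topology induced from $\Val_\pw X$, and likewise that the topology on $\Val_\pw X$ used in \cite{heckmann96} matches the weak topology $\Val_{\pw,\wk} X$ as defined here (i.e.\ continuity from $\Open_\pw X$ to $\creal$ generates the same topology as the subbasic sets $[r \ll U]$). Once those identifications are in place — and they are routine from the definitions of the point topology and of the weak topology — the statement is immediate. An alternative, more hands-on route would be to take a nonempty Skula-open subset of $\Val_\pw X$, i.e.\ a set of the form $W \cap F^c$ with $W$ weak-open and $F$ weak-closed, and to directly approximate an arbitrary point-continuous valuation lying in it by simple valuations, using that point-continuity means the value on each open $U$ is determined by behavior on finite ``test sets'' of points; but the sobrification-characterization argument above is cleaner and avoids that computation entirely.
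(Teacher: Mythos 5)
Your argument is exactly the paper's: Heckmann's Theorem~5.5 identifies $\Val_\pw X$ as a sobrification of $\Val_\fin X$ with the inclusion as unit, and Proposition~3.2 of Keimel--Lawson then forces the inclusion to have Skula-dense image. The extra bookkeeping you mention (matching topologies) is fine but not needed beyond what the cited results already provide; the proposal is correct and essentially identical to the paper's proof.
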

\begin{proof}
  By Proposition~3.2 of \cite{KL:dtopo}, cited earlier: since
  $\Val_\pw X$ is a sobrification of $\Val_\fin X$, with unit given by
  the inclusion map $i$, the image of $i$ must be Skula-dense.
\end{proof}

\begin{lemma}
  \label{lemma:mu:simple:approx:1}
  Let $X$ be a topological space and $\mathcal U$ be an open subset of
  $\Val_\pw X$.  For every $\nu \in \mathcal U$, there is a simple
  valuation $\nu'$ in $\mathcal U$ such that $\nu' \leq \nu$.
\end{lemma}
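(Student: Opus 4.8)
The plan is to derive this from the Skula-density of $\Val_\fin X$ in $\Val_\pw X$ (Lemma~\ref{lemma:VpX:dense}), using the elementary fact that downward closures of points are closed in the original topology. First I recall that $\Val_\pw X$, being a sobrification, is sober, hence $T_0$; its specialization preordering is the stochastic ordering (it is the one inherited from $\Val_\wk X$, since, as explained in Section~\ref{sec:bf-preliminaries}, $\Val_\pw X$ may be realized as the Skula-closure of $\Val_\fin X$ inside the sober space $\Val_\wk X$). Therefore, for the given $\nu \in \mathcal{U}$, the set $\dc \nu = \{\mu \in \Val_\pw X \mid \mu \leq \nu\}$ is exactly the closure of $\{\nu\}$ in $\Val_\pw X$, and in particular it is a closed subset of $\Val_\pw X$.

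Next I would form the set $\mathcal{U} \cap \dc \nu$. Since $\mathcal{U}$ is open and $\dc \nu$ is closed in $\Val_\pw X$, this set is open in the Skula topology on $\Val_\pw X$, by the very definition of that topology (it contains both the original opens and the original closeds as opens, hence their finite intersections). Moreover $\mathcal{U} \cap \dc \nu$ is nonempty, as it contains $\nu$ itself.

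Finally I invoke Lemma~\ref{lemma:VpX:dense}: $\Val_\fin X$ is Skula-dense in $\Val_\pw X$, so it meets every nonempty Skula-open subset, in particular $\mathcal{U} \cap \dc \nu$. Any $\nu'$ picked in $\Val_\fin X \cap \mathcal{U} \cap \dc \nu$ is then a simple valuation that lies in $\mathcal{U}$ and satisfies $\nu' \leq \nu$, which is what we want. I do not foresee any real obstacle; the only point deserving a moment's care is the identification of the specialization preordering of $\Val_\pw X$ with the stochastic ordering, so that $\dc \nu$ is genuinely closed and consists precisely of the point-continuous valuations pointwise below $\nu$.
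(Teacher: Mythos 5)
Your proposal is correct and is essentially the paper's own proof: intersect $\mathcal U$ with $\dc\nu$ (the closure of $\{\nu\}$), observe this is non-empty and Skula-open, and apply the Skula-density of $\Val_\fin X$ in $\Val_\pw X$ from Lemma~\ref{lemma:VpX:dense}. The extra care you take in identifying the specialization ordering of $\Val_\pw X$ with the stochastic ordering is a reasonable elaboration of the same argument, not a different route.
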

\begin{proof}
  $\mathcal U \cap \dc \nu$ is open in the Skula topology of
  $\Val_\wk X$, and is non-empty, since it contains $\nu$.  Using
  Lemma~\ref{lemma:VpX:dense}, it must contain an element $\nu'$ of
  $\Val_\fin X$.
\end{proof}

Heckmann also showed that, when $X$ is locally finitary compact,
\emph{all} continuous valuations are point-continuous, hence
$\Val_\wk X = \Val_\pw X$ \cite[Theorem~4.1]{heckmann96}.  Using that
information, we obtain the following.
\begin{lemma}
  \label{lemma:mu:simple:approx}
  Let $X$ be a locally finitary compact space, $\mathcal U$ be an open
  subset of $\Val_{\ast,\wk} X$, where $\ast$ is nothing or ``$\leq
  1$''.  For every $\nu \in \mathcal U$, there is a simple valuation
  $\nu'$ in $\mathcal U$ such that $\nu' \leq \nu$.
\end{lemma}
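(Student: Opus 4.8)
The plan is to reduce everything to Lemma~\ref{lemma:mu:simple:approx:1}, which already supplies a simple sub-valuation for open subsets of $\Val_\pw X$, by invoking Heckmann's observation (Theorem~4.1 of \cite{heckmann96}) that $\Val_\wk X = \Val_\pw X$ whenever $X$ is locally finitary compact. Since $X$ is locally finitary compact here, every continuous valuation on $X$ is point-continuous, so $\Val_{\wk} X = \Val_{\pw} X$, and likewise $\Val_{\leq 1,\wk} X = \Val_{\leq 1,\pw} X$.

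When $\ast$ is nothing, $\mathcal U$ is then an open subset of $\Val_\pw X$, and Lemma~\ref{lemma:mu:simple:approx:1} applies verbatim: for every $\nu \in \mathcal U$ it produces a simple valuation $\nu' \in \mathcal U$ with $\nu' \leq \nu$. So this case is immediate.

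When $\ast$ is ``$\leq 1$'', I would first lift $\mathcal U$ to the ambient space: by definition of the subspace topology, $\mathcal U = \mathcal V \cap \Val_{\leq 1,\wk} X$ for some open subset $\mathcal V$ of $\Val_\wk X = \Val_\pw X$. Given $\nu \in \mathcal U$, in particular $\nu \in \mathcal V$, so Lemma~\ref{lemma:mu:simple:approx:1} yields a simple valuation $\nu' \in \mathcal V$ with $\nu' \leq \nu$. The one thing to verify is that $\nu'$ comes back inside $\mathcal U$: this holds because $\nu' \leq \nu$ forces $\nu'(X) \leq \nu(X) \leq 1$, so $\nu'$ is itself a subprobability (continuous) valuation, hence $\nu' \in \mathcal V \cap \Val_{\leq 1,\wk} X = \mathcal U$.

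There is essentially no real obstacle beyond correctly citing Heckmann's result and being careful with the subspace topology. The only substantive point is that this argument genuinely fails for $\ast$ equal to ``$1$'': a simple valuation below a probability valuation need not have total mass $1$, so $\nu'$ may fail to be a probability valuation, and one cannot in general keep it inside an open subset of $\Val_{1,\wk} X$ while decreasing it. This is precisely why the statement excludes the probability case, which is treated by a different mechanism in Section~\ref{sec:case-prob-cont}.
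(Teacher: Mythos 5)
Your proposal is correct and follows essentially the same route as the paper: reduce to Lemma~\ref{lemma:mu:simple:approx:1} via Heckmann's equality $\Val_\wk X = \Val_\pw X$, and in the ``$\leq 1$'' case observe that $\nu' \leq \nu$ forces $\nu'$ to be a subprobability valuation, hence it stays in the given open set. The only cosmetic difference is that you phrase the second case through the identification of the weak topology on $\Val_{\leq 1} X$ with the subspace topology induced from $\Val_\wk X$ (which is valid, since the subbasic sets $[r \ll U]_{\leq 1}$ are exactly the traces of the sets $[r \ll U]$), whereas the paper unwinds $\mathcal U$ into a finite intersection of subbasic open sets.
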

\begin{proof}
  When $\ast$ is nothing, this is
  Lemma~\ref{lemma:mu:simple:approx:1}, together with the fact that
  $\Val_\wk X = \Val_\pw X$.

  When $\ast$ is ``$\leq 1$'', we use the definition of the weak
  topology: $\nu$ is in some finite intersection
  $\bigcap_{i=1}^m [U_i > r_i]_{\leq 1}$ of subbasic open sets
  included in $\mathcal U$.  Then $\nu$ is also in the corresponding
  finite intersection $\bigcap_{i=1}^m [U_i > r_i]$ of subbasic open
  sets of $\Val_\wk X$.  We have just seen that there is a simple
  valuation $\nu' \leq \nu$ in $\bigcap_{i=1}^m [U_i > r_i]$.  Since
  $\nu' \leq \nu$, $\nu'$ is a subprobability valuation, so $\nu'$ is in
  $\bigcap_{i=1}^m [U_i > r_i]_{\leq 1}$, hence in $\mathcal U$.
\end{proof}

\section{\bf Capacities}
\label{sec:capacities}

Capacities are a generalization of valuations (or measures) introduced
by Choquet \cite{Choquet:capacities}, where modularity is abandoned in
favor of weaker properties.  We will need the following kind.

Given a subset $B$ of a topological space, the \emph{unanimity game}
$\ugame B \colon \Open X \to \creal$ maps every open set $U$ to $1$ if
$B \subseteq U$, to $0$ otherwise.  When $B = \{x\}$, $\ugame B$ is
simply the Dirac mass $\delta_x$, but in general $\ugame B$ is not
modular.

We will consider functions $\kappa$ of the form
$\sum_{x \in A} a_x \ugame {B_x}$, where $A$ is a finite subset of
$X$, and for each $x \in A$, $a_x$ is a number in $\Rp$ and $B_x$ is a
finite non-empty subset of $X$, which we call \emph{simple capacities}
here.  We compare capacities, and in general all functions from $\Open
X$ to $\creal$, by $\kappa \leq \nu$ if and only if $\kappa (U) \leq
\nu (U)$ for every $U \in \Open X$, extending the stochastic ordering
from continuous valuations to all maps.

In this setting, an element $f$ of $\Sigma \eqdef \prod_{x \in A} B_x$
is a function that maps each point $x \in A$ to an element $f (x)$ in
$B_x$. One can think of such functions $f$ as \emph{strategies} for
picking an element of $B_x$ for each $x \in A$.
We let $\Delta_\Sigma$ be the set of all families $\vec\beta \eqdef
{(\beta_f)}_{f \in \Sigma}$ of non-negative real numbers such that
$\sum_{f \in \Sigma} \beta_f = 1$.  $\Delta_\Sigma$ is simply the
standard $n$-simplex $\Delta_n \eqdef \{(\beta_0, \beta_1, \cdots,
\beta_n) \in \Rp^{n+1} \mid \sum_{i=0}^n \beta_i=1\}$, where $n$ is
the cardinality of $\Sigma$ minus $1$.

In order to show the following lemma, we will need to introduce the
Choquet integral $\int_{x \in X} h (x) d\nu$ of a lower semicontinuous
map $h \colon X \to \creal$ with respect to a set function
$\nu \colon \Open X \to \creal$.  By definition, this is equal to the
Riemann integral $\int_0^\infty \nu (h^{-1} (]t, \infty])) dt$.  Note
that this makes sense, because $h^{-1} (]t, \infty])$ is open for
every $t \in \Rp$, and because every non-increasing map is
Riemann-integrable.  In our setting, this form of the Choquet integral
was introduced by Tix \cite{Tix:bewertung}, and differs only slightly
from Choquet's original definition
\cite[Section~48]{Choquet:capacities}.  Tix proved that, when $\nu$ is
a continuous valuation, $\int_{y \in X} h (y) d\nu$ is linear and
Scott-continuous in $h$ \cite[Lemma~4.2]{Tix:bewertung}.  It is an
easy exercise to verify that
$\int_{y \in X} \chi_U (y) d\nu = \nu (U)$ for every open subset $U$
of $X$, where $\chi_U$ is the characteristic map of $U$.  It follows
that, when $h$ is of the form $\sum_{j=0}^m \alpha_j \chi_{U_j}$,
$\int_{y \in X} h (y) d\nu = \sum_{j=0}^m \alpha_j \nu (U_j)$.

For a simple capacity $\kappa \eqdef \sum_{x \in A} a_x \ugame {B_x}$,
we compute $\int_{y \in X} h (y) d\kappa$ as follows.  For each
$x \in A$,
$\int_{y \in X} h (y) d\ugame {B_x} = \int_0^\infty \ugame {B_x}
(h^{-1} (]t, \infty])) dt$ by the Choquet formula.  But
$\ugame {B_x} (h^{-1} (]t, \infty])) = 1$ if and only if
$B_x \subseteq h^{-1} (]t, \infty])$, if and only if
$\min_{y \in B_x} h (y) > t$.  It follows that
$\int_{y \in X} h (y) d\ugame {B_x} = \min_{y \in B_x} h (y)$.  Hence
$\int_{y \in X} h (y) d\kappa = \sum_{x \in A} a_x \min_{y \in B_x} h
(y)$.

\begin{lemma}
  \label{lemma:mincred}
  Let $X$ be a topological space, and $\kappa \eqdef \sum_{x \in A}
  a_x \ugame {B_x}$ be a simple capacity on $X$.

  Let $\nu$ be any bounded continuous valuation on $X$.  If
  $\kappa \leq \nu$, then, for some $\vec\beta \in \Delta_\Sigma$,
  $\sum_{f \in \Sigma, x \in A} \beta_f a_x \delta_{f (x)} \leq \nu$.
\end{lemma}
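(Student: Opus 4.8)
The plan is to turn the infinitely many constraints hidden in ``$\mu \le \nu$'' into a finite linear feasibility problem and to solve it by the minimax theorem for finite two–player zero–sum games. Put $P \eqdef \bigcup_{x \in A} B_x$, a finite subset of $X$, and call $U \cap P$ the \emph{trace} of an open set $U$; the traces form a finite family $\mathcal T$ of subsets of $P$. For $\vec\beta \in \Delta_\Sigma$ write $\mu_{\vec\beta} \eqdef \sum_{f \in \Sigma, x \in A} \beta_f a_x \delta_{f(x)}$, which is a simple valuation; since every $f(x)$ lies in $P$, for any open $U$ with trace $T$ we have $\mu_{\vec\beta} (U) = \sum_{f \in \Sigma} \beta_f \phi (f,T)$, where $\phi (f,T) \eqdef \sum_{x \in A,\ f (x) \in T} a_x$ depends only on $T$. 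Hence, setting $r_T \eqdef \inf \{\nu (U) \mid U \in \Open X,\ U \cap P = T\}$ (a finite number, using that $\nu$ is bounded), the relation $\mu_{\vec\beta} \le \nu$ holds if and only if $\sum_{f \in \Sigma} \beta_f \phi (f,T) \le r_T$ for every $T \in \mathcal T$: the forward direction is obtained by taking the infimum over all $U$ of a given trace, the reverse direction is immediate.

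Thus it suffices to find $\vec\beta \in \Delta_\Sigma$ meeting these finitely many constraints. Consider the zero–sum matrix game with minimizer strategies $\Sigma$, maximizer strategies $\mathcal T$, and payoff $M (f,T) \eqdef \phi (f,T) - r_T$. By the minimax theorem, $\min_{\vec\beta \in \Delta_\Sigma} \max_{T \in \mathcal T} \sum_{f} \beta_f M (f,T) = \max_{\vec\lambda \in \Delta_{\mathcal T}} \min_{f \in \Sigma} \sum_{T} \lambda_T M (f,T)$, and the minimum on the left is attained because $\Delta_\Sigma$ is compact. So it is enough to prove that the right-hand side is $\le 0$; an optimal $\vec\beta$ for the left-hand side will then satisfy $\sum_f \beta_f \phi (f,T) \le r_T$ for all $T$, which is exactly what we want.

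Fix $\vec\lambda \in \Delta_{\mathcal T}$ and set $\Lambda (p) \eqdef \sum_{T \in \mathcal T,\ p \in T} \lambda_T$ for $p \in P$. Minimizing coordinatewise over $\Sigma = \prod_{x \in A} B_x$, $\min_{f \in \Sigma} \sum_T \lambda_T \phi (f,T) = \min_{f} \sum_{x \in A} a_x \Lambda (f (x)) = \sum_{x \in A} a_x \min_{y \in B_x} \Lambda (y)$, so it remains to show $\sum_{x \in A} a_x \min_{y \in B_x} \Lambda (y) \le \sum_{T} \lambda_T r_T$. Given $\epsilon > 0$, choose for each $T \in \mathcal T$ an open $U_T$ with $U_T \cap P = T$ and $\nu (U_T) < r_T + \epsilon$, and form the lower semicontinuous step function $h \eqdef \sum_{T \in \mathcal T} \lambda_T \chi_{U_T}$. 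For $y \in P$ one has $h (y) = \sum_{T \ni y} \lambda_T = \Lambda (y)$, so $\int_{y \in X} h (y) \, d\kappa = \sum_{x \in A} a_x \min_{y \in B_x} \Lambda (y)$ by the Choquet formula for simple capacities computed just above, while $\int_{y \in X} h (y) \, d\nu = \sum_T \lambda_T \nu (U_T) < \sum_T \lambda_T r_T + \epsilon$. Since $\kappa \le \nu$ and the Choquet integral of a non-negative map is monotone in the set function (immediate from $\int_0^\infty \kappa (h^{-1} (]t,\infty])) \, dt \le \int_0^\infty \nu (h^{-1} (]t,\infty])) \, dt$), we get $\sum_{x \in A} a_x \min_{y \in B_x} \Lambda (y) < \sum_T \lambda_T r_T + \epsilon$, and letting $\epsilon \to 0$ concludes.

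The only genuinely delicate point — the expected main obstacle — is that the infima $r_T$ need not be attained, so there is in general no single ``most economical'' open set of a given trace; the $\epsilon$-approximation in the last paragraph is precisely what sidesteps this. Everything else is routine bookkeeping: checking that, because $B_x \subseteq P$ for each $x$, only the trace of an open set on the finite set $P$ is relevant both for evaluating $\mu_{\vec\beta}$ and in the Choquet formula for $\kappa$, and that boundedness of $\nu$ keeps all the numbers $r_T$ finite, so that the finite matrix game is well defined.
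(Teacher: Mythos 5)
Your proof is correct, and it handles the one genuinely delicate issue---that the stochastic ordering $\mu_{\vec\beta} \leq \nu$ imposes infinitely many constraints---differently from the paper. Both arguments share the same engine: von Neumann's minimax theorem plus the computation $\int_{y \in X} h(y)\, d\kappa = \sum_{x \in A} a_x \min_{y \in B_x} h(y)$ for lower semicontinuous step functions $h$, with $\kappa \leq \nu$ transferred through the Choquet integral. Where you diverge is in the reduction to a finite problem. The paper first proves the statement for an arbitrary \emph{finite} family of open sets $U_0, \dots, U_m$ (one application of minimax per family, with the mixed strategy $\vec\alpha$ ranging over convex combinations of the $U_j$'s), and then removes the finiteness restriction by a separate compactness argument: the sets $C_{\mathcal A} \subseteq \Delta_\Sigma$ of admissible $\vec\beta$'s for each finite family $\mathcal A$ are closed, non-empty, and have the finite intersection property, so their intersection over all of $\Pfin(\Open X)$ is non-empty. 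You instead observe up front that $\mu_{\vec\beta}$ is supported on the finite set $P = \bigcup_{x \in A} B_x$, so only the trace $U \cap P$ matters, and you compress each trace class into a single number $r_T = \inf \nu(U)$; this turns the whole problem into one finite matrix game, solved by a single application of minimax, at the price of the $\epsilon$-approximation needed because the infima $r_T$ need not be attained (and of checking, as you do, that boundedness of $\nu$ keeps the payoffs real---the paper needs boundedness for the same reason). Your route avoids the finite-intersection-property step on $\Delta_\Sigma$ entirely; the paper's route avoids introducing the auxiliary quantities $r_T$ and the limiting argument, keeping each minimax application about actual values $\nu(U_j)$. Both are complete proofs; yours is arguably more economical in that compactness of the simplex is invoked only once, inside the minimax theorem itself.
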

\begin{proof}
  This is a consequence of von Neumann's original minimax theorem
  \cite{vN:minimax}, which says that given any $n \times m$ matrix $M$
  with real entries,
  \begin{equation}
    \label{eq:minimax}
    \min_{\vec\alpha \in \Delta_m} \max_{\vec\beta \in \Delta_n}
    {\vec\beta}^\intercal M \vec\alpha
    = \max_{\vec\beta \in \Delta_n} \min_{\vec\alpha \in \Delta_m}
    {\vec\beta}^\intercal
    M \vec\alpha.
  \end{equation}
  In particular: $(\dagger)$ if for every $\vec\alpha \in \Delta_m$,
  there is a $\vec\beta \in \Delta_n$ such that
  ${\vec\beta}^\intercal M \vec\alpha \geq 0$ (namely, if the
  left-hand side of (\ref{eq:minimax}) is non-negative), then there is
  a $\vec\beta \in \Delta_n$ such that, for every
  $\vec\alpha \in \Delta_m$,
  ${\vec\beta}^\intercal M \vec\alpha \geq 0$.

  We first show that: $(*)$ given finitely many open subsets $U_0$, $U_1$,
  \ldots, $U_m$ of $X$, we can find a $\vec\beta \in \Delta_\Sigma$
  such that, for every $j$, $0\leq j \leq m$,
  $\sum_{f \in \Sigma, x \in A} \beta_f a_x \delta_{f (x)} (U_j) \leq
  \nu (U_j)$.

  Let $\kappa$ denote $\sum_{x \in A} a_x \ugame {B_x}$.  Since
  $\kappa \leq \nu$, for every lower semicontinuous map $h$,
  $\int_{y \in X} h (y) d\kappa = \int_0^\infty \kappa (h^{-1} (]t,
  \infty])) dt \leq \int_0^\infty \nu (h^{-1} (]t, \infty])) dt =
  \int_{y \in X} h (y) d\nu$.  In other words,
  $\sum_{x \in A} a_x \min_{y \in B_x} h (y) \leq \int_{y \in X} h (y)
  d\nu$.

  For every $\vec\alpha \in \Delta_m$, we consider
  $h_{\vec\alpha} \eqdef \sum_{j=0}^m \alpha_j \chi_{U_j}$ for $h$.
  The inequality we have just shown can be rewritten as
  $\sum_{x \in A} a_x \min_{y \in B_x} h_{\vec\alpha} (y) \leq
  \sum_{j=0}^m \alpha_j \nu (U_j)$.  For each $x \in A$, there is an
  element $y \in B_x$ that makes $h_{\vec\alpha} (y)$ minimal, and we
  call it $f_{\vec\alpha} (x)$.  Therefore
  $\sum_{x \in A} a_x h_{\vec\alpha} (f_{\vec\alpha} (x)) \leq
  \sum_{j=0}^m \alpha_j \nu (U_j)$.  By definition of
  $h_{\vec\alpha}$, and since
  $\chi_{U_j} (f_{\vec\alpha} (x)) = \delta_{f_{\vec\alpha} (x)}
  (U_j)$, this can be written equivalently as
  $\sum_{x \in A} \sum_{j=0}^m \alpha_j a_x \delta_{f_{\vec\alpha}
    (x)} (U_j) \leq \sum_{j=0}^m \alpha_j \nu (U_j)$.  It follows that
  there is a vector $\vec\beta$ in $\Delta_\Sigma$ such that, for
  every $j$, $0 \leq j\leq m$,
  $\sum_{j=0}^m \alpha_j \nu (U_j) - \sum_{j=0}^m \alpha_j \sum_{f \in
    \Sigma, x \in A} \beta_f a_x \delta_{f (x)} (U_j) \geq 0$: namely,
  $\beta_f \eqdef 1$ if $f=f_{\vec\alpha}$, and $\beta_f \eqdef 0$
  otherwise.

  That can also be written as
  $\sum_{f \in \Sigma, 0 \leq j \leq m} \alpha_j \beta_f \left(\nu
    (U_j) - \sum_{x \in A} a_x \delta_{f (x)} (U_j)\right) \geq 0$,
  hence as ${\vec\beta}^\intercal M \vec\alpha \geq 0$ for some matrix
  $M$.  Using $(\dagger)$, there is a vector
  $\vec\beta \in \Delta_\Sigma$ such that, for every
  $\vec\alpha \in \Delta_m$,
  ${\vec\beta}^\intercal M \vec\alpha \geq 0$, in other words
  $\sum_{j=0}^m \alpha_j \nu (U_j) - \sum_{j=0}^m \alpha_j \sum_{f \in
    \Sigma, x \in A} \beta_f a_x \delta_{f (x)} (U_j) \geq 0$.  In
  particular, for each $j$, $0\leq j\leq m$, taking $\vec\alpha$ such
  that $\alpha_j \eqdef 1$ and all its other components are $0$,
  $\nu (U_j) \geq \sum_{f \in \Sigma, x \in A} \beta_f a_x \delta_{f
    (x)} (U_j)$.  This proves $(*)$.

  For every finite family $\mathcal A$ of open subsets of $X$, let
  $C_{\mathcal A}$ be the set of vectors $\vec\beta \in \Delta_\Sigma$
  such that
  $\sum_{f \in \Sigma, x \in A} \beta_f a_x \delta_{f (x)} (U) \leq
  \nu (U)$ for every $U \in \mathcal A$.  Claim $(*)$ above states
  that $C_{\mathcal A}$ is non-empty (when $\mathcal A$ is non-empty;
  when $\mathcal A$ is empty, this is vacuously true).  It is also a
  closed subset of $\Delta_\Sigma$.  The family
  ${(C_{\mathcal A})}_{\mathcal A \in \Pfin (\Open X)}$ then has the
  finite intersection property: given any finite collection of
  elements $\mathcal A_1$, \ldots, $\mathcal A_k$ in
  $\Pfin (\Open X)$,
  $\bigcap_{i=1}^k C_{\mathcal A_i} = C_{\bigcup_{i=1}^k \mathcal
    A_i}$ is non-empty.  Since $\Delta_\sigma$ is compact, the
  intersection
  $\bigcap_{\mathcal A \in \Pfin (\Open X)} C_{\mathcal A}$ is
  non-empty.  Let $\vec\beta$ be any vector in that intersection.  For
  every $U \in \Open X$, since $\vec\beta$ is in $C_{\{U\}}$, we have
  $\sum_{f \in \Sigma, x \in A} \beta_f a_x \delta_{f (x)} (U) \leq
  \nu (U)$, and we conclude.
\end{proof}

\section{\bf The main theorem}
\label{sec:bf-main-theorem}

We come to our main theorem.  It applies in particular to every
quasi-continuous dcpo $X$, namely to every locally finitary compact
sober space, as we have announced; but sobriety is not needed.  We
spend the rest of the section proving it.
\begin{theorem}
  \label{thm:V:locfin}
  For every locally finitary compact space $X$,
  $\Val_\wk X=\Val_\pw X$ and
  $\Val_{\leq 1, \wk} X=\Val_{\leq 1, \pw} X$ are compact, locally
  finitary compact, sober spaces.  In particular, they are
  quasi-continuous dcpos and the weak topology coincides with the
  Scott topology.

  The sets of the form $\interior {\upc E}$, where $E$ ranges over the
  finite non-empty sets of simple (resp., simple subprobability)
  valuations form a base of the topology.
\end{theorem}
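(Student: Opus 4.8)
The plan is to establish the theorem in several stages, moving from a convenient "test base" of open sets, to showing the space is locally finitary compact with respect to that base, and finally deducing sobriety and the coincidence of topologies.

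\textbf{Step 1: Reduce to finite intersections of subbasic opens.} First I would work entirely in $\Val_{\ast,\wk} X = \Val_{\ast,\pw} X$ (equal by Heckmann's Theorem~4.1, as already noted). Every open set is a union of finite intersections $\mathcal{U} = \bigcap_{i=1}^m [U_i > r_i]_\ast$, so it suffices to produce, for each $\nu \in \mathcal{U}$, a finite non-empty set $E$ of simple (sub)probability valuations with $\nu \in \interior{\upc E} \subseteq \mathcal{U}$. By Lemma~\ref{lemma:mu:simple:approx} I may already assume there is a simple $\nu' \le \nu$ in $\mathcal{U}$; the real work is to ``thicken'' $\nu'$ into a finitary compact neighborhood. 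The key is local finitary compactness of $X$: for each $i$, and each point $x$ in the support of $\nu'$ lying in $U_i$, choose a finitary compact set $\upc F \subseteq U_i$ with $x \in \interior{\upc F}$. This is where I expect to use a careful bookkeeping argument to choose, for the finitely many support points and finitely many indices $i$, finitary compact sets $\upc F_x \subseteq X$ (one per support point $x$) such that $x \in \interior{\upc F_x}$ and $\upc F_x \subseteq U_i$ whenever $x \in U_i$.

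\textbf{Step 2: Build the candidate neighborhood base.} Given such $\upc F_x$, replace $\nu' = \sum_x a_x \delta_x$ by the simple capacity $\kappa \eqdef \sum_x a_x \,\ugame{F_x}$. Since each $F_x$ is finite and $F_x \subseteq U_i$ whenever $x \in U_i$, one checks $\kappa(U_i) \ge \nu'(U_i) > r_i'$ for suitable thresholds, so $\{\mu : \mu \ge \kappa \text{ componentwise on the } U_i\}$ sits inside $\mathcal{U}$; moreover $\nu'$, hence $\nu$, lies in the interior because $x \in \interior{\upc F_x}$ forces membership of the relevant subbasic opens. Now I invoke Lemma~\ref{lemma:mincred}: if $\mu$ is a continuous valuation with $\kappa \le \mu$, then for some $\vec\beta \in \Delta_\Sigma$ the simple valuation $\sum_{f,x} \beta_f a_x \delta_{f(x)}$ is $\le \mu$. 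The finite set $E$ I want is exactly $\{\sum_{f \in \Sigma, x \in A} \beta_f a_x \delta_{f(x)} \mid \vec\beta \text{ a vertex of } \Delta_\Sigma\}$, i.e. the finitely many simple valuations $\nu_f \eqdef \sum_{x \in A} a_x \delta_{f(x)}$, $f \in \Sigma$; every $\vec\beta$-combination is $\ge$ some vertex $\nu_f$, so Lemma~\ref{lemma:mincred} gives $\mu \in \upc E$. Conversely each $\nu_f \ge$-dominates makes $\mu \in \bigcap_i [U_i > r_i]_\ast$ (since $\nu_f(U_i) \ge \kappa(U_i)$ by $f(x) \in F_x$). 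Thus $\upc E \subseteq \mathcal{U}$, and $\nu \in \interior{\upc E}$ because $\nu \ge \kappa$... this last containment $\upc E \supseteq \{\mu : \kappa \le \mu\}$ combined with $\kappa \le \nu$ gives $\nu \in \interior{\upc E}$ provided I verify the interior of $\{\mu : \kappa \le \mu\}$ (a finite intersection of weak-subbasic opens at the $x$'s, using $x \in \interior{\upc F_x}$) contains $\nu$. This establishes that the sets $\interior{\upc E}$ with $E$ finite non-empty of simple (sub)probability valuations form a base, giving the final sentence of the theorem, and in particular shows $\Val_{\ast,\wk} X$ is locally finitary compact.

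\textbf{Step 3: Compactness and sobriety; conclude.} Compactness of $\Val_\wk X$ and $\Val_{\leq 1,\wk} X$ I would get from Heckmann \cite{heckmann96} or directly: $[X > r]$-style covers reduce to compactness considerations on $X$; alternatively $\Val_{\leq 1,\wk} X$ is closed in $\Val_\wk X$, which was already observed to be sober. For sobriety: $\Val_\wk X$ is sober for every space $X$ by \cite[Proposition~5.1]{heckmann96}, and $\Val_{\leq 1,\wk} X$ was shown Skula-closed in it, hence sober. Since $\Val_{\ast,\wk} X$ is now shown to be locally finitary compact and sober, the characterization \cite[Exercise~8.3.39]{JGL-topology} quoted in Section~\ref{sec:bf-preliminaries} says it is a quasi-continuous dcpo whose topology is the Scott topology; and since the weak topology is always coarser than the Scott topology and we have just identified it with a topology that is (this very characterization) the Scott topology, the two coincide. \textbf{The main obstacle} I anticipate is Step 1--2: correctly choosing the finitary compact sets $F_x$ uniformly over all indices $i$ and threshold-matching so that the combinatorial identity $\upc E = \{\mu : \kappa \le \mu\}$ (via Lemma~\ref{lemma:mincred} in one direction and a direct vertex computation in the other) holds exactly, while simultaneously keeping $\nu$ in the interior --- this is the crux where quasi-continuity of $X$ is genuinely used and where the minimax lemma does its work.
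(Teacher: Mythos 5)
Your overall skeleton (reduce to a simple valuation via Lemma~\ref{lemma:mu:simple:approx}, surround its support points by finitary compact sets inside the relevant $U_i$, pass to the simple capacity $\kappa=\sum_x a_x\ugame{F_x}$, and invoke the minimax Lemma~\ref{lemma:mincred}) is the same as the paper's, but Step~2 contains a genuine error. You take $E$ to be the finite set of ``vertices'' $\nu_f=\sum_{x\in A}a_x\delta_{f(x)}$, $f\in\Sigma$, claiming that every convex combination $\sum_{f,x}\beta_f a_x\delta_{f(x)}$ dominates some vertex, so that Lemma~\ref{lemma:mincred} would give $\{\mu:\kappa\le\mu\}\subseteq\upc E$. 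This is false: with $A=\{x\}$, $a_x=1$ and $F_x=\{y_1,y_2\}$ for incomparable $y_1,y_2$, the valuation $\mu=\frac12\delta_{y_1}+\frac12\delta_{y_2}$ satisfies $\ugame{F_x}\le\mu$ but dominates neither $\delta_{y_1}$ nor $\delta_{y_2}$, so the ``combinatorial identity'' $\upc E=\{\mu:\kappa\le\mu\}$ you aim for fails. The set of valuations dominating $\kappa$ is only covered by the upward closure of \emph{all} the combinations $a\sum_{f,x}\beta_f a_x\delta_{f(x)}$, $\vec\beta\in\Delta_\Sigma$, which is an infinite family; the paper makes it finite not by restricting to vertices but by rounding all coefficients down to integer multiples of $\frac1N$ (the $\overline\varpi$ construction of Definition~\ref{defn:E}), and the room needed for this rounding has to be manufactured in advance by shrinking with a factor $a<1$ and interpolating $a\nu(U_i)>s_i>r_i$ (Lemma~\ref{lemma:upE:U}). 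Your proposal builds in no such slack, so even after repairing $E$ the inclusion $\upc E\subseteq\mathcal U$ would not follow as stated.

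The second gap is the open neighborhood witnessing $\nu\in\interior{\upc E}$, which you leave as ``the relevant subbasic opens at the $x$'s''. A naive choice such as $\bigcap_{x\in A}[\,\cdot\ll\interior{\upc F_x}]$ does not imply $\kappa\le\mu$: when the sets $\interior{\upc F_x}$ overlap, one portion of the mass of $\mu$ can be counted towards several constraints at once, and $\mu$ need not dominate $\kappa$ on opens containing some $F_x$'s but not others. The paper needs two devices here: first, the finitary compacts are chosen so that $\upc B_x\subseteq\bigcap_{i\in I_x}U_i\diff\dc(A\diff\upc x)$ and nested so that $B_y\subseteq\interior{\upc B_x}$ whenever $x\le y$ in $A$ (Lemmas~\ref{lemma:z}--\ref{lemma:b}); second, the open set is $\mathcal V\eqdef\bigcap_{B}[s_B\ll V_B]$, indexed by the \emph{upward-closed subsets} $B$ of $A$, with $V_B=\bigcup_{x\in B}\interior{\upc B_x}$ and $s_B=a\sum_{x\in B}a_x$, and only then does Lemma~\ref{lemma:V:Q} give $\mathcal V\subseteq\{\mu:\kappa\le\mu\}$. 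Your Step~1 choice of the $F_x$ (only $x\in\interior{\upc F_x}$ and $F_x\subseteq U_i$ when $x\in U_i$) omits both the avoidance of $\dc(A\diff\upc x)$ and the nesting, and without them the containment of an open neighborhood of $\nu$ in $\{\mu:\kappa\le\mu\}$ breaks down. (Your Step~3, on compactness, sobriety and the identification of the weak with the Scott topology, is fine, though compactness is more simply seen from the zero valuation being a least element.)
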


Let $\ast$ be nothing or ``$\leq 1$''.  We recall that the equality
$\Val_\wk X = \Val_\pw X$ holds for every locally finitary compact
space $X$, as shown by Heckmann \cite[Theorem~4.1]{heckmann96}.  The
equality $\Val_{\leq 1, \wk} X = \Val_{\leq 1, \pw} X$ immediately
follows from it.

We also recall that the quasi-continuous dcpos are exactly the locally
finitary compact sober spaces, and in particular that their topology
must be the Scott topology.  The fact that $\Val_{\ast,\wk} X$ is
compact follows from the fact that it has a least element in the
stochastic ordering, namely the zero valuation: every open cover
${(\mathcal U_i)}_{i \in I}$ of $\Val_{\ast,\wk}$ must be such that
some $\mathcal U_i$ contains the zero valuation, and therefore
coincide with the whole of $\Val_{\ast,\wk} X$, since open sets are
upwards-closed.

Finally, we recall that $\Val_{\ast, \wk} X$ is sober.

Therefore, it remains to show that $\Val_{\ast, \wk} X$ is locally
finitary compact.  In the rest of this section, we fix
$\nu \in \Val_{\ast,\wk} X$, and an open neighborhood $\mathcal U$ of
$\nu$ in the weak topology.  Then $\nu$ is in some finite intersection
$\bigcap_{i=1}^n [U_i > r_i]_\ast$ included in $\mathcal U$, where
each $U_i$ is open in $X$ and $r_i \in \Rp \diff \{0\}$.  We will find
a finite set $E$ of simple valuations and an open subset $\mathcal V$
of $\Val_{\ast, \wk} X$ such that
$\nu \in \mathcal V \subseteq \upc E \subseteq \mathcal U$.

Let us simplify the problem slightly.  By
Lemma~\ref{lemma:mu:simple:approx}, there is a simple valuation
$\nu' \leq \nu$ in $\bigcap_{i=1}^n [U_i > r_i]_\ast$. Hence, without loss
of generality, we may assume that $\nu$ itself is a simple valuation
$\sum_{x \in A} a_x \delta_x$, where $A$ is a finite subset of $X$,
and $a_x \in \Rp \diff \{0\}$ for every $x \in A$.

Since $\nu (U_i) > r_i$ for every $i$, $1\leq i\leq n$, there is a
number $a \in ]0, 1[$ such that $a . \nu (U_i) > r_i$ for every $i$.
There is also a positive number $s_i$ such that
$a . \nu (U_i) > s_i > r_i$.  We will need those numbers $a$ and $s_i$
only near the end of the proof.

Let us define a suitable open set $\mathcal V$.  For each point
$x \in A$, let $I_x \eqdef \{i \in I \mid x \in U_i\}$.  Then
$\bigcap_{i \in I_x} U_i \diff \dc (A \diff \upc x)$ is an open
neighborhood of $x$.  It is easy to see that $x$ is in
$\bigcap_{i \in I_x} U_i$, but perhaps a bit less easy to see that $x$
is not in $\dc (A \diff \upc x)$: otherwise there woud be an element
$y \in A \diff \upc x$ above $x$, and that is impossible.

Since $X$ is locally finitary compact, for each $x \in A$, one can
find a finite set $B_x$ such that
$x \in \interior {\upc B_x} \subseteq \upc B_x \subseteq \bigcap_{i
  \in I_x} U_i \diff \dc (A \diff \upc x)$.  We will require a bit
more, and we will make sure that $B_x$ is also included in
$\interior {\upc B_y}$ for every $y \in A$ such that $y \leq x$.  This
can be done by finding $B_x$ in stages, starting from the lowest
elements $x$ of $A$ and going up.  Formally, since $A$ is finite, we
define $B_x$ by course-of-values induction on the number of elements
$y \in A$ such that $y \leq x$, as follows: for each $x \in A$, we
simply find a finite set $B_x$ such that
$x \in \interior {\upc B_x} \subseteq \upc B_x \subseteq \bigcap_{i
  \in I_x} U_i \diff \dc (A \diff \upc x) \cap \bigcap_{y \in A, y<x}
\interior {\upc B_y}$, where the last term is available, and an open
neighborhood of $y \in A$ hence of $x$, by induction hypothesis.

We also define $V_x \eqdef \interior {\upc B_x}$.  We note the
following three facts.
\begin{lemma}
  \label{lemma:z}
  For all $x, y \in A$ with $x \leq y$, $\upc B_y \subseteq V_x
  \subseteq \upc B_x$.  \qed
\end{lemma}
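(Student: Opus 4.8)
The plan is to read both inclusions of $\upc B_y \subseteq V_x \subseteq \upc B_x$ directly off the construction of the finite sets $B_z$ carried out just before the lemma; this is routine, and I foresee no real obstacle beyond a little bookkeeping about that construction.

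The inclusion $V_x \subseteq \upc B_x$ holds for every $x \in A$ with nothing to prove: by definition $V_x \eqdef \interior{\upc B_x}$, and the interior of a set is always contained in the set.

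For the inclusion $\upc B_y \subseteq V_x$ when $x \le y$ in $A$, I would invoke the extra property secured while building the $B_z$, namely that $B_z$ is chosen inside $\interior{\upc B_w}$ for every $w \in A$ with $w \le z$. Applying this with $z \eqdef y$ and $w \eqdef x$ yields $B_y \subseteq \interior{\upc B_x} = V_x$. Since $V_x$ is open, it is saturated, i.e.\ upward-closed for the specialization preordering, so $\upc B_y \subseteq \upc V_x = V_x$, which is exactly what is claimed.

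The one point calling for a word of care is not the lemma but the soundness of the construction it rests on: the $B_z$ are defined by course-of-values induction on $|\{w \in A : w \le z\}|$, which terminates because $A$ is finite, and at each stage the set into which $B_z$ must be fitted, namely $\bigl(\bigcap_{i \in I_z} U_i\bigr) \diff \dc (A \diff \upc z)$ intersected with the finitely many $\interior{\upc B_w}$ for $w < z$, is still an open neighbourhood of $z$: the first factor by the elementary observations made just before the lemma, and each $\interior{\upc B_w}$ because it is open and, by the induction hypothesis, contains $w$ hence contains $z$ as $z \ge w$. Local finitary compactness then furnishes a suitable finite $B_z$ with $z \in \interior{\upc B_z} \subseteq \upc B_z$ inside that neighbourhood.
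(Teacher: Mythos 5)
Your argument is the intended one: the lemma is stated in the paper without proof precisely because both inclusions are meant to be read off the construction of the sets $B_x$, exactly as you do. The right-hand inclusion $V_x \subseteq \upc B_x$ is indeed immediate from $V_x \eqdef \interior{\upc B_x}$, and for $x < y$ your derivation of $\upc B_y \subseteq V_x$ (via $B_y \subseteq \interior{\upc B_x}$ and upward-closedness of open sets) is correct; in fact the formal construction even gives $\upc B_y \subseteq \interior{\upc B_x}$ directly.

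There is one wrinkle, at $x = y$. You invoke the property that $B_z \subseteq \interior{\upc B_w}$ for every $w \in A$ with $w \leq z$, but the construction you describe in your own last paragraph (and the paper's formal construction) only fits $\upc B_z$ into $\interior{\upc B_w}$ for $w < z$. Using it at $w = z$ would require $B_x \subseteq \interior{\upc B_x}$, i.e.\ that $\upc B_x = V_x$ is open, and this cannot in general be arranged: in $[0,1]$ with the Scott topology (a continuous, hence locally finitary compact, dcpo), $\upc B = [\min B, 1]$ has interior $]\min B, 1]$, so $B \subseteq \interior{\upc B}$ fails whenever $\min B > 0$. So your proof does not establish the literal $x = y$ instance $\upc B_x \subseteq V_x$ --- but this is an overstatement in the lemma (and in the paper's informal sentence ``for every $y \in A$ such that $y \leq x$'') rather than a defect of your argument: the only consequences used later are $B_y \subseteq \upc B_x$ for $x \leq y$ (trivial when $x = y$) and $V_x \subseteq \upc B_x$, both of which you do obtain. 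To make your write-up self-contained, either restrict the left-hand inclusion to $x < y$, or state it in the weaker form $\upc B_y \subseteq \upc B_x$ for $x \leq y$, which is all that is needed.
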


\begin{lemma}
  \label{lemma:a}
  For every $x \in A$, for every $i \in I$, if $x \in U_i$ , then
  $B_x \subseteq U_i$.
\end{lemma}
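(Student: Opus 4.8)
The plan is to unwind the definitions; the statement is almost immediate from the way $B_x$ was constructed. First I would recall that the index set was defined as $I_x \eqdef \{i \in I \mid x \in U_i\}$, so the hypothesis $x \in U_i$ says exactly that $i \in I_x$.

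Next I would invoke the defining property of $B_x$. By construction (the course-of-values induction on the number of $y \in A$ with $y \leq x$), $B_x$ was chosen so that
\[
  x \in \interior{\upc B_x} \subseteq \upc B_x \subseteq \Bigl(\bigcap_{j \in I_x} U_j\Bigr) \diff \dc (A \diff \upc x) \cap \bigcap_{y \in A,\, y < x} \interior{\upc B_y}.
\]
In particular $\upc B_x \subseteq \bigcap_{j \in I_x} U_j$, hence $B_x \subseteq \upc B_x \subseteq \bigcap_{j \in I_x} U_j$. Since $i \in I_x$ by the first step, this last intersection is contained in $U_i$, and therefore $B_x \subseteq U_i$, as claimed.

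I expect no genuine obstacle here: the only thing to observe is that the hypothesis $x \in U_i$ places the index $i$ inside the set $I_x$ over which the intersection in the defining inclusion for $\upc B_x$ is taken. The lemma is recorded on its own merely because it will be combined with Lemma~\ref{lemma:z} (and a third elementary fact about the sets $B_x$) in the body of the proof of Theorem~\ref{thm:V:locfin}.
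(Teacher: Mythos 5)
Your proposal is correct and follows exactly the paper's argument: from $x \in U_i$ you deduce $i \in I_x$, and then the inclusion $B_x \subseteq \upc B_x \subseteq \bigcap_{j \in I_x} U_j$ built into the construction of $B_x$ gives $B_x \subseteq U_i$. Nothing to add.
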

\begin{proof}
  If $x \in U_i$, then $i \in I_x$.  Since
  $B_x \subseteq \bigcap_{i \in I_x} U_i$, the claim follows.
\end{proof}

\begin{lemma}
  \label{lemma:b}
  For all $x, y \in A$, $x \in V_y$ if and only if $y \leq x$.
\end{lemma}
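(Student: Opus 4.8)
I would prove the two implications separately, drawing only on the construction of the sets $B_x$ and $V_x = \interior {\upc B_x}$ carried out just above, together with Lemma~\ref{lemma:z}.

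For the direction from $y \leq x$ to $x \in V_y$: by construction $x \in \interior {\upc B_x} = V_x$, and trivially $V_x \subseteq \upc B_x$. Applying Lemma~\ref{lemma:z} with the roles of $x$ and $y$ interchanged --- legitimate because $y \leq x$ --- yields $\upc B_x \subseteq V_y$. Hence $x \in V_x \subseteq \upc B_x \subseteq V_y$. (Alternatively, for $y < x$ one can bypass Lemma~\ref{lemma:z} and argue directly: the staged choice of $B_x$ forces $B_x \subseteq \interior {\upc B_y} = V_y$, and $V_y$, being open, is upwards-closed, so $\upc B_x \subseteq V_y$; the case $y = x$ is immediate since $x \in V_x$.)

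For the direction from $x \in V_y$ to $y \leq x$: we have $V_y = \interior {\upc B_y} \subseteq \upc B_y$, and the choice of $B_y$ guarantees $\upc B_y \subseteq \bigcap_{i \in I_y} U_i \diff \dc (A \diff \upc y)$; in particular $x \notin \dc (A \diff \upc y)$, hence $x \notin A \diff \upc y$. Since $x \in A$, it follows that $x \in \upc y$, i.e.\ $y \leq x$.

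I do not expect any genuine difficulty here: both halves are a couple of lines once the definitions of $B_x$ and $V_x$ are unfolded. The only point that calls for a little care --- and the closest thing to an obstacle --- is the bookkeeping: getting the direction of the inclusions in Lemma~\ref{lemma:z} right, and keeping in mind that the $B_x$ were built in increasing stages precisely so that $\upc B_x \subseteq V_y$ whenever $y < x$. The substantive facts (that each $\upc B_y$ avoids $\dc (A \diff \upc y)$, and the staged nesting of the $B_x$) have already been secured in the construction preceding the lemma, so nothing new needs to be proved.
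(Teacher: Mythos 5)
Your proof is correct and takes essentially the same route as the paper: the direction $x \in V_y \Rightarrow y \leq x$ is exactly the paper's argument (stated there contrapositively), resting on the fact that $\upc B_y$ was chosen to avoid $\dc (A \diff \upc y)$. The only difference is cosmetic: for $y \leq x \Rightarrow x \in V_y$ the paper simply observes that $V_y$ is an open, hence upwards-closed, neighborhood of $y$, so your detour through Lemma~\ref{lemma:z} and the staged construction, while valid, is unnecessary.
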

\begin{proof}
  If $y \leq x$, and since $V_y$ is an open neighborhood of $y$, and
  is in particular upwards-closed, $x$ is also in $V_y$. If
  $y \not\leq x$, then $x$ is in $A \diff \upc y$, hence in
  $\dc (A \diff \upc y)$. It follows that $x$ cannot be in
  $\bigcap_{i \in I_y} U_i \diff \dc (A \diff \upc y)$, hence cannot
  be in the smaller set $V_y$.
\end{proof}

\begin{definition}[$\mathcal V$]
  \label{defn:V}
  Let $\pow_{\upc} A$ denote the (finite) family of upwards-closed
  subsets of $A$.  For each $B \in \pow_{\upc} A$, let
  $V_B \eqdef \bigcup_{x \in B} V_x$.  Let also
  $s_B \eqdef a . \sum_{x \in B} a_x$. The open set $\mathcal V$ is
  $\bigcap_{B \in \pow_{\upc} A} [s_B \ll V_B]$.
\end{definition}
Recall that $\mu \in [s_B \ll V_B]$ if and only if $s_B \ll \mu
(V_B)$, if and only if $s_B=0$ or $s_B < \mu (V_B)$.

\begin{lemma}
  \label{lemma:nu:V}
  $\nu \in \mathcal V$.
\end{lemma}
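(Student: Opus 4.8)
The plan is to unfold Definition~\ref{defn:V} and verify, for each upwards-closed subset $B$ of $A$, the single inequality it requires. By that definition, $\nu \in \mathcal V$ means that $s_B \ll \nu (V_B)$ for every $B \in \pow_{\upc} A$, equivalently that $s_B = 0$ or $s_B < \nu (V_B)$. So I would simply fix such a $B$ and check this.

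The first step is to compute $\nu (V_B)$ explicitly. Since $\nu = \sum_{x \in A} a_x \delta_x$ and $\delta_x (V_B) = 1$ precisely when $x \in V_B$ (and $0$ otherwise), we have $\nu (V_B) = \sum_{x \in A,\, x \in V_B} a_x$; note $V_B = \bigcup_{y \in B} V_y$ is open, so this makes sense. Now, for $x \in A$, $x \in V_B$ means $x \in V_y$ for some $y \in B$, and by Lemma~\ref{lemma:b} this holds if and only if $y \leq x$ for some $y \in B$. Since $x \in A$ and $B$ is upwards-closed \emph{in} $A$, that condition is in turn equivalent to $x \in B$. Hence $\{x \in A \mid x \in V_B\} = B$, and therefore $\nu (V_B) = \sum_{x \in B} a_x$.

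It then remains to compare $s_B = a . \sum_{x \in B} a_x = a . \nu (V_B)$ with $\nu (V_B)$. If $B = \emptyset$, then $V_B = \emptyset$ and $\nu (V_B) = 0 = s_B$, so $s_B \ll \nu (V_B)$ holds trivially. If $B \neq \emptyset$, then $\nu (V_B) = \sum_{x \in B} a_x$ is a strictly positive real number (each $a_x$ lies in $\Rp \diff \{0\}$, and $A$ being finite it is in particular not $\infty$); since $a \in {]0,1[}$, we get $s_B = a . \nu (V_B) < \nu (V_B)$, i.e.\ $s_B \ll \nu (V_B)$. As $B$ was an arbitrary element of $\pow_{\upc} A$, this gives $\nu \in \mathcal V$. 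There is essentially no obstacle here: the only point that needs care is the identity $\{x \in A \mid x \in V_B\} = B$, which rests on Lemma~\ref{lemma:b} together with the upward-closedness of $B$ in $A$; once $\nu(V_B)$ is identified, the strict inequality is immediate from $a < 1$ and the positivity of the coefficients $a_x$.
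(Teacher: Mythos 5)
Your proof is correct and follows essentially the same route as the paper: establish $A \cap V_B = B$ via Lemma~\ref{lemma:b} and the upward-closedness of $B$ in $A$, deduce $\nu(V_B) = \sum_{x \in B} a_x$, and conclude $s_B \ll \nu(V_B)$ from $a < 1$. Your explicit case split on $B = \emptyset$ is a harmless elaboration of the paper's one-line observation that $a\cdot\sum_{x\in B} a_x \ll \sum_{x\in B} a_x$, which already covers that case since $0 \ll 0$.
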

\begin{proof}
  For every $B \in \pow_{\upc} A$, we claim that $A \cap V_B = B$. For
  every $x \in B$, $V_x$ is included in $V_B$, and since $V_x$ is an
  open neighborhood of $x$, it follows that $x$ is in $V_B$; $x$ is
  also in $A$, since $B \subseteq A$.  Conversely, if
  $x \in A \cap V_B$, then $x$ is in $V_y$ for some $y \in B$.  Both
  $x$ and $y$ are in $A$, so by Lemma~\ref{lemma:b}, we obtain that
  $y \leq x$.  Since $B$ is upwards-closed, $x$ is in $B$.

  Let us verify that $\nu$ is in $\mathcal V$, namely that, for every
  $B \in \pow_{\upc} A$, $s_B \ll \nu (V_B)$.  Indeed,
  $\nu (V_B) = \sum_{x \in A \cap V_B} a_x = \sum_{x \in B} a_x$,
  since $A \cap V_B = B$.  Now, since $a < 1$,
  $a . \sum_{x \in B} a_x \ll \sum_{x \in B} a_x$.  In other words,
  $s_B \ll \nu (V_B)$, as desired.
\end{proof}

Finding the finite set $E$ is more difficult.  As a first step in that
direction, let $\kappa \eqdef a . \sum_{x \in A} a_x \ugame {B_x}$,
and let us consider the set $\mathcal Q$ of all the continuous
valuation $\mu \in \Val_\ast X$ such that $\kappa \leq \mu$.
\begin{lemma}
  \label{lemma:V:Q}
  $\mathcal V \subseteq \mathcal Q$.
\end{lemma}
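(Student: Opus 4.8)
The plan is to unwind the definitions of $\mathcal V$ (Definition~\ref{defn:V}) and of $\mathcal Q$, and to show directly that every $\mu \in \mathcal V$ satisfies $\kappa \leq \mu$, i.e., $\kappa (U) \leq \mu (U)$ for every open subset $U$ of $X$. Since $\kappa = a . \sum_{x \in A} a_x \ugame {B_x}$ and $\ugame {B_x} (U) = 1$ exactly when $B_x \subseteq U$, we have $\kappa (U) = a . \sum_{x \in A,\ B_x \subseteq U} a_x$. So the natural move is, for a fixed open $U$, to introduce $B \eqdef \{x \in A \mid B_x \subseteq U\}$, which immediately gives $\kappa (U) = a . \sum_{x \in B} a_x = s_B$.

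First I would check that $B \in \pow_{\upc} A$, so that $[s_B \ll V_B]$ really is one of the constraints defining $\mathcal V$. This is where Lemma~\ref{lemma:z} enters: if $x \in B$ and $y \in A$ with $x \leq y$, then $B_y \subseteq \upc B_y \subseteq V_x \subseteq \upc B_x$; and since $B_x \subseteq U$ with $U$ open, hence upwards-closed, we get $\upc B_x \subseteq U$, whence $B_y \subseteq U$ and $y \in B$. Next I would record that $V_B \subseteq U$: for each $x \in B$, $V_x = \interior {\upc B_x} \subseteq \upc B_x \subseteq U$ by the same upwards-closedness of $U$, and $V_B = \bigcup_{x \in B} V_x$.

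With these two observations the conclusion follows by monotonicity: from $\mu \in \mathcal V \subseteq [s_B \ll V_B]$ we get $s_B \ll \mu (V_B)$, hence $s_B \leq \mu (V_B)$; and $V_B \subseteq U$ gives $\mu (V_B) \leq \mu (U)$. Chaining, $\kappa (U) = s_B \leq \mu (V_B) \leq \mu (U)$. As $U$ was arbitrary, $\kappa \leq \mu$, i.e., $\mu \in \mathcal Q$. (The degenerate case $B = \emptyset$ is harmless, since then $\kappa (U) = 0 \leq \mu (U)$.)

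I do not expect a genuine obstacle here; the content is entirely bookkeeping. The one point to get right is to identify $B = \{x \in A \mid B_x \subseteq U\}$ as the relevant upwards-closed subset of $A$, and then to use Lemma~\ref{lemma:z} — together with the elementary fact that $B_x \subseteq U$ forces $\upc B_x \subseteq U$ because $U$ is open — in both the ``$B$ is upwards-closed'' step and the ``$V_B \subseteq U$'' step. Everything else is monotonicity of $\mu$ and the definition of $\ll$ on $\creal$.
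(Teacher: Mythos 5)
Your proof is correct and is essentially the paper's own argument: the same set $B=\{x\in A\mid B_x\subseteq U\}$, the same use of Lemma~\ref{lemma:z} to see that $B$ is upwards-closed, and the same chain $s_B\ll\mu(V_B)\leq\mu(U)$ via $V_B\subseteq U$. The only difference is that you spell out explicitly that $B_x\subseteq U$ forces $\upc B_x\subseteq U$, which the paper leaves implicit.
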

\begin{proof}
  Let $\mu$ be any element of $\mathcal V$.  We must show that, for
  every open subset $U$ of $X$,
  $a . \sum_{x \in A, B_x \subseteq U} a_x \leq \mu (U)$.

  Let $B \eqdef \{x \in A \mid B_x \subseteq U\}$.  For every
  $x \in B$ and every $y \in A$ with $x \leq y$, we have
  $B_y \subseteq \upc B_x \subseteq U$ by Lemma~\ref{lemma:z}, so $y$
  is in $B$.  Hence $B$ is upwards-closed in $A$.

  Then the left-hand side $a . \sum_{x \in A, B_x \subseteq U}
  a_x$ is just $s_B$.  Since $\mu \in \mathcal V$, $s_B \ll \mu
  (V_B)$. We recall that $V_B = \bigcup_{x \in B} V_x$, that
  $V_x$ is included in $\upc B_x$ for each
  $x$, and that (by the definition of $B$), $\upc
  B_x$ is included in $U$ for every $x \in B$. Therefore $V_B
  \subseteq U$, and hence $\mu (V_B) \leq \mu
  (U)$, which concludes the proof.
\end{proof}

Let $\Sigma \eqdef \prod_{x \in A} B_x$, and $\Delta_\Sigma$ be the
associated standard simplex.  Lemma~\ref{lemma:V:Q}, together with
Lemma~\ref{lemma:mincred}, immediately implies the following.
\begin{lemma}
  \label{lemma:V:beta}
  Every element $\mu$ of $\mathcal V$ is above a simple valuation of
  the form
  $a. \sum_{f \in \Sigma, x \in A} \beta_f a_x \delta_{f (x)}$, for
  some $\vec\beta \in \Delta_\Sigma$.
\end{lemma}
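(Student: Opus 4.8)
The plan is to obtain this by simply chaining Lemma~\ref{lemma:V:Q} with Lemma~\ref{lemma:mincred}; all the genuine work has already been done in those two lemmas. First I would fix an arbitrary $\mu \in \mathcal V$ and invoke Lemma~\ref{lemma:V:Q} to place $\mu$ in $\mathcal Q$, that is, to record that $\kappa \leq \mu$. Then I would observe that $\kappa = a . \sum_{x \in A} a_x \ugame {B_x} = \sum_{x \in A} (a\,a_x)\,\ugame {B_x}$ is literally a simple capacity in the sense of Section~\ref{sec:capacities}: $A$ is finite, each coefficient $a\,a_x$ lies in $\Rp$, and each $B_x$ is a finite non-empty subset of $X$ --- non-empty because $x \in \interior {\upc B_x}$ while $\interior {\upc \emptyset} = \emptyset$.

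With $\kappa$ in this normal form and $\kappa \leq \mu$, Lemma~\ref{lemma:mincred} applies directly --- with its ``$a_x$'' instantiated to our $a\,a_x$, and with $\Sigma = \prod_{x \in A} B_x$ being exactly the $\Sigma$ fixed just before the statement. It yields a vector $\vec\beta \in \Delta_\Sigma$ such that $\sum_{f \in \Sigma,\, x \in A} \beta_f\,(a\,a_x)\,\delta_{f (x)} \leq \mu$. Factoring the common $a$ out of this finite sum rewrites the left-hand side as $a . \sum_{f \in \Sigma,\, x \in A} \beta_f a_x \delta_{f (x)}$, which is precisely the simple valuation named in the statement; hence $\mu$ is above it, as required.

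The only point that needs a little care is the boundedness hypothesis of Lemma~\ref{lemma:mincred}. When $\ast$ is ``$\leq 1$'' this is automatic since $\mu (X) \leq 1$. When $\ast$ is nothing, $\mu$ may fail to be bounded; but an open set $U$ with $\mu (U) = \infty$ imposes only the vacuous constraint $\sum_{f, x} \beta_f (a\,a_x) \delta_{f (x)} (U) \leq \infty$, so the finite-matrix minimax argument inside Lemma~\ref{lemma:mincred} (which works with finitely many open sets at a time, before the compactness limit over $\Pfin (\Open X)$) can be carried out after discarding such $U$; on the remaining finitely many open sets $\mu$ takes only finite values, so it restricts there to a bounded set function dominating $\kappa$, and the same conclusion comes out. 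I expect this bit of boundedness bookkeeping to be the only --- and quite minor --- obstacle.
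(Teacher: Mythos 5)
Your proposal is correct and follows essentially the same route as the paper, which derives Lemma~\ref{lemma:V:beta} exactly by combining Lemma~\ref{lemma:V:Q} (giving $\kappa \leq \mu$) with Lemma~\ref{lemma:mincred} applied to the simple capacity $\kappa = \sum_{x \in A} (a\,a_x)\,\ugame{B_x}$. Your extra care about the boundedness hypothesis of Lemma~\ref{lemma:mincred} is a sensible (and correct) refinement of a point the paper passes over silently when $\ast$ is nothing: constraints coming from open sets of infinite $\mu$-measure are vacuous, since the left-hand side is bounded by $a\sum_{x\in A} a_x$, so the minimax step need only be run on the finitely many opens where $\mu$ is finite.
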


Let $E_0$ be the set of simple valuations obtained this way, namely
the set of simple valuations
$a . \sum_{f \in \Sigma, x \in A} \beta_f a_x \delta_{f (x)}$, where
$\vec\beta \in \Delta_\Sigma$.  We have just shown that every element
$\mu$ of $\mathcal V$ is above some element of $E_0$.

Note that the elements $\varpi$ of $E_0$ can all be written as
$\sum_{z \in Z} c_z \delta_z$, where $Z \eqdef \bigcup_{x \in A} B_x$,
and $c_z \in \Rp$.  For each such $\varpi$, let $\overline\varpi$ be
$\sum_{z \in Z} \frac 1 N \lfloor N c_z \rfloor$, where $N$ is a
fixed, large enough (in particular, non-zero) natural number that we
will determine shortly.

\begin{definition}[$E$]
  \label{defn:E}
  $E$ is the set of all simple valuations $\overline\varpi$, where
  $\varpi$ ranges over $E_0$.
\end{definition}

\begin{lemma}
  \label{lemma:E:fin}
  $E$ is a finite set.
\end{lemma}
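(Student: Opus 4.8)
The plan is to show that $E = \{\overline\varpi \mid \varpi \in E_0\}$ is finite by checking that each $\overline\varpi$ is determined by a tuple ranging over a finite index set. First I would recall that $Z \eqdef \bigcup_{x \in A} B_x$ is finite, being a finite union of finite sets, and that every element of $E_0$ has the form $\sum_{z \in Z} c_z \delta_z$ with $c_z \in \Rp$; hence $\overline\varpi$ is completely determined by the finite tuple ${(\lfloor N c_z \rfloor)}_{z \in Z}$. It therefore suffices to bound the coefficients $c_z$ uniformly over $\varpi \in E_0$: once we know $0 \le c_z \le M$ for a constant $M$ not depending on $\varpi$, the integers $\lfloor N c_z \rfloor$ range over the finite set $\{0, 1, \ldots, \lfloor NM \rfloor\}$, so there are only finitely many admissible tuples, and thus finitely many $\overline\varpi$.

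The key step is this uniform bound. I would write a generic element $\varpi \in E_0$ as $a . \sum_{f \in \Sigma, x \in A} \beta_f a_x \delta_{f(x)}$ for some $\vec\beta \in \Delta_\Sigma$, so that the coefficient of $\delta_z$ in $\varpi$ is $c_z = a \sum_{f \in \Sigma,\, x \in A,\, f(x) = z} \beta_f a_x$. Using $\beta_f \ge 0$, $a < 1$, and the normalization $\sum_{f \in \Sigma} \beta_f = 1$ of points of $\Delta_\Sigma$, I would estimate
\[
0 \le c_z \le a \sum_{f \in \Sigma,\, x \in A} \beta_f a_x = a \sum_{x \in A} a_x \Bigl(\sum_{f \in \Sigma} \beta_f\Bigr) = a \sum_{x \in A} a_x \le \sum_{x \in A} a_x .
\]
Hence $0 \le c_z \le M$ with $M \eqdef \sum_{x \in A} a_x$, which is independent of $\varpi$ (and of $N$).

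Putting these together, the assignment $\varpi \mapsto {(\lfloor N c_z \rfloor)}_{z \in Z}$ takes values in the finite set $\{0, 1, \ldots, \lfloor NM \rfloor\}^{Z}$ and determines $\overline\varpi$; therefore $E$ is finite, as claimed. I do not expect any real obstacle here: the only point requiring a little care is the uniform boundedness of the coefficients $c_z$, and that is immediate from $\sum_{f \in \Sigma} \beta_f = 1$. The precise value of $N$, which will be pinned down later in the argument for other reasons, is irrelevant to this finiteness statement.
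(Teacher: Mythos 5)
Your proof is correct and follows essentially the same route as the paper: the paper's argument is exactly that $Z$ is finite and the coefficients $\frac 1 N \lfloor N c_z \rfloor$ are integer multiples of $\frac 1 N$ lying between $0$ and $\sum_{x \in A} a_x$, which is the uniform bound you derive from $\sum_{f \in \Sigma} \beta_f = 1$. Your write-up merely makes that bound explicit, so there is nothing to add.
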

\begin{proof}
  $Z$ is finite and the coefficients $\frac 1 N \lfloor N c_z \rfloor$
  are integer multiples of $\frac 1 N$ between $0$ and
  $\sum_{x \in A} a_x$.
\end{proof}

\begin{lemma}
  \label{lemma:overline:varpi}
  $\mathcal V \subseteq \upc E$.
\end{lemma}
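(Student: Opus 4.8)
The plan is to combine Lemma~\ref{lemma:V:beta} with the elementary observation that rounding coefficients down can only decrease a simple valuation in the stochastic ordering. Concretely, fix $\mu \in \mathcal V$. By Lemma~\ref{lemma:V:beta} there is some $\vec\beta \in \Delta_\Sigma$ with $\varpi \eqdef a . \sum_{f \in \Sigma, x \in A} \beta_f a_x \delta_{f (x)} \leq \mu$, and this $\varpi$ is by definition an element of $E_0$. It therefore suffices to show that $\overline\varpi \leq \varpi$, since then $\overline\varpi \leq \mu$ with $\overline\varpi \in E$, whence $\mu \in \upc E$.

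To prove $\overline\varpi \leq \varpi$, I would first rewrite $\varpi$ in its collected form $\sum_{z \in Z} c_z \delta_z$ over the \emph{finite} index set $Z \eqdef \bigcup_{x \in A} B_x$, exactly as was done just before Definition~\ref{defn:E}; here $c_z$ gathers the contributions $a . \beta_f a_x$ of all pairs $(f,x)$ with $f (x) = z$, so the points $z$ that appear are pairwise distinct. Since $\overline\varpi = \sum_{z \in Z} \frac 1 N \lfloor N c_z \rfloor \delta_z$ and $\frac 1 N \lfloor N c_z \rfloor \leq c_z$ for each $z$, evaluating at an arbitrary open subset $U$ of $X$ gives $\overline\varpi (U) = \sum_{z \in Z \cap U} \frac 1 N \lfloor N c_z \rfloor \leq \sum_{z \in Z \cap U} c_z = \varpi (U)$, which is precisely the statement $\overline\varpi \leq \varpi$.

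There is no genuine obstacle here: the lemma is a one-line consequence of Lemma~\ref{lemma:V:beta} once one notes that $\varpi \mapsto \overline\varpi$ lies below the identity for the stochastic ordering on simple valuations. The only points deserving care are that one must pass to the collected form of $\varpi$ before reading off its coefficients (distinct strategy/point pairs may land on the same Dirac point), and that the value of $N$ plays no role in this lemma — any non-zero natural number works, its magnitude mattering only later, when one establishes $\upc E \subseteq \mathcal U$.
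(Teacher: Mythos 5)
Your proof is correct and follows exactly the paper's argument: by Lemma~\ref{lemma:V:beta} every $\mu \in \mathcal V$ dominates some $\varpi \in E_0$, and since $\frac 1 N \lfloor N c_z \rfloor \leq c_z$ for each coefficient, $\overline\varpi \leq \varpi \leq \mu$, so $\mu \in \upc E$. The extra remarks about collecting coefficients over $Z$ and the irrelevance of $N$ at this stage are accurate but do not change the route.
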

\begin{proof}
  For every $z \in Z$, and every $c_z \in \Rp$,
  $\frac 1 N \lfloor N c_z \rfloor \leq c_z$.  It follows that
  $\overline\varpi \leq \varpi$ for every $\varpi \in E_0$.  Since
  every element of $\mathcal V$ is above some element $\varpi$ of
  $E_0$ by Lemma~\ref{lemma:V:beta}, it is also above the
  corresponding element $\overline\varpi$ of $E$.
\end{proof}

\begin{lemma}
  \label{lemma:upE:U}
  $\upc E \subseteq \mathcal U$.
\end{lemma}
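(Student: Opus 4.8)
The plan is to reduce the inclusion $\upc E \subseteq \mathcal U$ to a numerical inequality at the open sets $U_1,\ldots,U_n$, and then to read that inequality off from the averaging argument behind the definition of $E_0$ together with the rounding that produces $E$. Concretely, since $\bigcap_{i=1}^n [U_i > r_i]_\ast \subseteq \mathcal U$ by the choice of the $U_i$ and $r_i$, it suffices to show $\upc E \subseteq \bigcap_{i=1}^n [U_i > r_i]_\ast$. An element of $\upc E$ is a $\mu \in \Val_\ast X$ with $\mu \ge \overline\varpi$ for some $\overline\varpi \in E$, and then $\mu(U_i) \ge \overline\varpi(U_i)$; so the whole statement reduces to the single claim that $\overline\varpi(U_i) > r_i$ for every $\overline\varpi \in E$ and every $i$, $1 \le i \le n$.

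The heart of the argument is a lower bound on $\varpi(U_i)$ for $\varpi \in E_0$. Write $\varpi = a . \sum_{f \in \Sigma, x \in A} \beta_f a_x \delta_{f(x)}$ with $\vec\beta \in \Delta_\Sigma$, and fix $f \in \Sigma$ and an index $i$. For every $x \in A$ with $x \in U_i$ we have $f(x) \in B_x$ and, by Lemma~\ref{lemma:a}, $B_x \subseteq U_i$, hence $\delta_{f(x)}(U_i) = 1$; for the remaining $x \in A$ we merely use $\delta_{f(x)}(U_i) \ge 0$. Summing, $\sum_{x \in A} a_x \delta_{f(x)}(U_i) \ge \sum_{x \in A \cap U_i} a_x = \nu(U_i)$. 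Averaging over $f$ and using $\sum_{f \in \Sigma} \beta_f = 1$ gives $\varpi(U_i) \ge a . \nu(U_i)$, which is $> s_i > r_i$ by the choice of $a$ and of $s_i$. Thus $\varpi(U_i) > s_i$ for every $\varpi \in E_0$ and every $i$.

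It remains to absorb the rounding loss, and this is exactly where the "large enough" natural number $N$ gets fixed. Writing $\varpi = \sum_{z \in Z} c_z \delta_z$, we have $\overline\varpi = \sum_{z \in Z} \frac{1}{N} \lfloor N c_z \rfloor \delta_z$ and $\frac{1}{N}\lfloor N c_z \rfloor > c_z - \frac{1}{N}$ for each $z$, so
\[
  \overline\varpi(U_i) = \sum_{z \in Z \cap U_i} \frac{1}{N}\lfloor N c_z \rfloor
  > \varpi(U_i) - \frac{|Z \cap U_i|}{N}
  \ge \varpi(U_i) - \frac{|Z|}{N} > s_i - \frac{|Z|}{N}.
\]
Since each $s_i - r_i$ is a strictly positive real and $Z$ is finite, I would now fix $N$ to be any natural number with $\frac{|Z|}{N} < s_i - r_i$ for all $i$, e.g.\ any $N > |Z|/\min_{1 \le i \le n}(s_i - r_i)$. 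Then $\overline\varpi(U_i) > s_i - (s_i - r_i) = r_i$ for every $\overline\varpi \in E$ and every $i$, and by the reduction of the first paragraph $\upc E \subseteq \bigcap_{i=1}^n [U_i > r_i]_\ast \subseteq \mathcal U$. (When $\ast$ is ``$\le 1$'', one also notes $\overline\varpi \le \varpi$ and $\varpi(X) = a . \nu(X) < 1$, so the elements of $E$ are genuine subprobability valuations, as required for the last sentence of the theorem.)

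Once $N$ is chosen the argument is routine; the only steps that need care are the inequality $\varpi(U_i) \ge a . \nu(U_i)$, which rests entirely on Lemma~\ref{lemma:a} (every $B_x$ with $x \in U_i$ lies inside $U_i$), and checking that the total rounding loss at each $U_i$ stays below $\min_i(s_i - r_i)$ uniformly over all of $E$ — which works precisely because $Z$, and hence the number of rounded coefficients, does not depend on the particular $\varpi \in E_0$.
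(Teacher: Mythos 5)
Your proof is correct and follows essentially the same route as the paper: reduce to showing every $\overline\varpi \in E$ lies in $\bigcap_{i=1}^n [U_i > r_i]_\ast$, obtain $\varpi(U_i) \geq a\,\nu(U_i) > s_i$ from Lemma~\ref{lemma:a}, and then fix $N$ so that the rounding loss $|Z|/N$ stays below each $s_i - r_i$. The only (cosmetic) difference is that you bound $\varpi(U_i)$ by a direct computation over the strategies $f \in \Sigma$, whereas the paper first notes $\varpi \geq \kappa$ via $\delta_{f(x)} \geq \ugame{B_x}$; your closing remark that the elements of $E$ are genuine subprobability valuations when $\ast$ is ``$\leq 1$'' is a correct and welcome touch.
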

\begin{proof}
  We show that $E$ is included in $\bigcap_{i=1}^n [U_i >
  r_i]_\ast$. For every $x \in A$, for every $y \in B_x$, we have
  $\delta_y \geq \ugame {B_x}$, simply because every open neighborhood
  of $B_x$ must contain $x$. Hence, for every $\varpi \in E_0$, say
  $\varpi = a . \sum_{f \in \Sigma, x \in A} \beta_f a_x \delta_{f
    (x)}$, where $\vec\beta \in \Delta_\Sigma$, we have
  $\varpi \geq a . \sum_{f \in \Sigma, x \in A} \beta_f a_x \ugame
  {B_x} = a. \sum_{x \in A} (\sum_{f \in \Sigma} \beta_f) a_x \ugame
  {B_x} = a . \sum_{x \in A} a_x \ugame {B_x} = \kappa$.  For every
  $i$, $1\leq i\leq n$, Lemma~\ref{lemma:a} states that for every
  $x \in A$, if $x \in U_i$ then $B_x$ is included in $U_i$.
  Therefore
  $\varpi (U_i) = a. \sum_{x \in A, B_x \subseteq U_i} a_x \geq
  a. \sum_{x \in A \cap U_i} a_x = a. \nu (U_i)$.  We now remember
  that $a . \nu (U_i) > s_i > r_i$.  In particular,
  $\varpi (U_i) > s_i$.

  It is time we fixed the value of $N$.  The values of $c_z$ and of
  $\frac 1 N \lfloor N c_z \rfloor$ differ by $\frac 1 N$ at most, so
  for any open set $U$, the values $\varpi (U)$ and
  $\overline\varpi (U)$ differ by $\frac 1 N |Z|$ at most, where $|Z|$
  is the cardinality of $Z$.  It follows that
  $\overline\varpi (U_i) > s_i - \frac 1 N |Z|$.  By picking any
  non-zero natural number $N$ larger than or equal to
  $\frac {|Z|} {s_i-r_i}$ for every $i$, $1\leq i\leq n$, we therefore
  ensure that $\overline\varpi (U_i) > r_i$ for every $i$, hence that
  $\overline\varpi$ is in $\mathcal U$.  Since that holds for every
  $\varpi \in E_0$, $E$ is included in $\mathcal U$, hence also
  $\upc E$.
\end{proof}

Hence, as promised, $\nu \in \mathcal V$ (Lemma~\ref{lemma:nu:V})
$\subseteq \upc E$ (Lemma~\ref{lemma:overline:varpi})
$\subseteq \mathcal U$ (Lemma~\ref{lemma:upE:U}), where $\mathcal V$
is open (Definition~\ref{defn:V}) and $E$ is finite
(Lemma~\ref{lemma:E:fin}).  This concludes the proof of
Theorem~\ref{thm:V:locfin}.  \qed

\section{The case of probability continuous valuations}
\label{sec:case-prob-cont}

We now apply the previous results to the space $\Val_{1,\wk} X$ of
probability continuous valuations.  A space $X$ is \emph{pointed} if
and only if it has a least element $\bot$ in its specialization
preordering.  We are not assuming $X$ to be $T_0$, so $\dc \bot$ is a
closed subset that may be different from $\{\bot\}$.  The open subsets
of $X \diff \dc \bot$ are just the proper open subsets of $X$.

The following is \emph{Edalat's lifting trick}, which was introduced
in \cite[Section~3]{edalat95a} for dcpos, and in
\cite[Section~7.4]{alvarez00} for stably locally compact spaces.
Every continuous valuation $\nu$ on $X$ gives rise to a continuous
valuation $\nu^-$ on $X \diff \dc \bot$ by $\nu^- (U) \eqdef \nu (U)$
for every $U \in \Open (X \diff \dc \bot)$.  If $\nu \in \Val_1 X$,
then $\nu^-$ is in $\Val_{\leq 1} X$, and we have much more, as we now
show.
\begin{lemma}
  \label{lemma:VwX:pointed}
  Let $X$ be a pointed topological space, with least element $\bot$.
  The map $\nu \mapsto \nu^-$ is a homeomorphism of $\Val_{1,\wk} X$
  onto $\Val_{\leq 1,\wk} (X \diff \dc\bot)$.  Its inverse maps every
  subprobability continuous valuation $\mu$ on $X \diff \dc \bot$ to
  $\mu^+$, defined by
  $\mu^+ (U) \eqdef \mu (U \diff \dc \bot) + (1-\mu (X \diff \dc
  \bot)) \delta_\bot$, for every $U \in \Open (X \diff \dc \bot)$.
\end{lemma}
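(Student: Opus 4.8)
The plan is to check directly that $\nu\mapsto\nu^-$ and $\mu\mapsto\mu^+$ are well-defined, mutually inverse, and continuous in both directions for the weak topologies. The one structural fact doing all the work is the one already recorded above: the open subsets of $X\diff\dc\bot$ are exactly the \emph{proper} open subsets of $X$, i.e. those not containing $\bot$ --- equivalently those different from $X$, since open sets are upwards-closed and $\bot$ is least. Hence $\Open X=\Open(X\diff\dc\bot)\cup\{X\}$, a continuous valuation on $X$ is completely determined by its restriction to the proper open sets together with its value at $X$, and for a probability valuation that value at $X$ is forced to be $1$. That is precisely what makes the two maps inverse to each other.

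First I would verify that $\nu\mapsto\nu^-$ lands in $\Val_{\leq1,\wk}(X\diff\dc\bot)$: strictness, modularity, monotonicity and Scott-continuity of $\nu^-$ are inherited from $\nu$ because the lattice operations and directed suprema in $\Open(X\diff\dc\bot)$ are computed as in $\Open X$ (a union of proper open sets, directed or not, is again proper), and $\nu^-(X\diff\dc\bot)=\nu(X\diff\dc\bot)\leq\nu(X)=1$. Injectivity is then immediate, since $\nu$ is reconstructible from $\nu^-$ together with the constraint $\nu(X)=1$.

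Next, for $\mu\in\Val_{\leq1}(X\diff\dc\bot)$ I would observe that $\mu^+$ is the sum of the set function $U\mapsto\mu(U\diff\dc\bot)$ --- a continuous valuation on $X$, being the image of $\mu$ under the (manifestly continuous) inclusion $X\diff\dc\bot\hookrightarrow X$ --- and the continuous valuation $(1-\mu(X\diff\dc\bot))\,\delta_\bot$, which is a legitimate non-negative scalar multiple of a Dirac mass because $\mu(X\diff\dc\bot)\leq1$. Hence $\mu^+$ is a continuous valuation; modularity and Scott-continuity can alternatively be checked by the obvious split into the cases ``$U$ proper'' and ``$U=X$''. Moreover $\mu^+(X)=\mu(X\diff\dc\bot)+(1-\mu(X\diff\dc\bot))=1$, so $\mu^+\in\Val_1 X$. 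Since a proper open set $U$ satisfies $U\diff\dc\bot=U$ and $\delta_\bot(U)=0$, we have $\mu^+(U)=\mu(U)$ on proper opens and $\mu^+(X)=1$; this yields at once $(\mu^+)^-=\mu$ and $(\nu^-)^+=\nu$, so the two maps are mutually inverse bijections.

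For the homeomorphism: the preimage of a subbasic open $[V>r]_{\leq1}$ of $\Val_{\leq1,\wk}(X\diff\dc\bot)$ under $\nu\mapsto\nu^-$ is $\{\nu\in\Val_1 X\mid\nu(V)>r\}=[V>r]_1$, which is open since $V$ is a proper open subset of $X$; and the preimage of a subbasic open $[U>r]_1$ of $\Val_{1,\wk}X$ under $\mu\mapsto\mu^+$ is $[U>r]_{\leq1}$ when $\bot\notin U$, and is either the whole space (if $r<1$) or empty (if $r\geq1$) when $U=X$, using $\mu^+(X)=1$ --- open in every case. I do not expect a genuine obstacle here; the only point requiring care is that $X$ is not assumed $T_0$, so one must work throughout with $\dc\bot$ rather than $\{\bot\}$ and keep in mind that the proper open sets of $X$ are exactly those disjoint from $\dc\bot$. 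The rest is routine two-case bookkeeping.
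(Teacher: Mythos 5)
Your proof is correct and follows essentially the same route as the paper's: the same observation that the proper open subsets of $X$ are exactly the open subsets of $X \diff \dc\bot$, the same two-case computation showing $(\nu^-)^+=\nu$ and $(\mu^+)^-=\mu$, and the same computation of preimages of subbasic opens $[U>r]_\ast$ in both directions. The only difference is that you also spell out why $\mu^+$ is a continuous valuation (as an image valuation plus a scalar multiple of $\delta_\bot$), a point the paper leaves implicit.
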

\begin{proof}
  Let $\nu \in \Val_1 X$.  For every $U \in \Open X$,
  ${(\nu^-)}^+ (U) = \nu^- (U \diff \dc \bot) + (1-\nu^- (X \diff \dc
  \bot)) \delta_\bot (U)$.  If $U$ is a proper open subset of $X$,
  then $U$ does not contain $\bot$, so $U \diff \dc \bot = U$, and
  $\delta_\bot (U)=0$, so ${(\nu^-)}^+ (U) = \nu^- (U) = \nu (U)$.  If
  $U=X$, then
  ${(\nu^-)}^+ (U) = \nu^- (X \diff \dc \bot) + (1-\nu^- (X \diff \dc
  \bot)) = 1$, and this is equal to $\nu (U)$ since $U=X$ and
  $\nu \in \Val_1 X$.

  For every $U \in \Open (X \diff \dc \bot)$,
  ${(\mu^+)}^- (U) = \mu^+ (U) = \mu (U)$, since
  $U \diff \dc \bot = U$, and $\bot$ is not in $U$.

  Hence the two maps $\nu \mapsto \nu^-$ and $\mu \mapsto \mu^+$ are
  inverse of each other.
  
  For every open subset $U$ of $X$ and every $r \in \Rp \diff \{0\}$,
  the inverse image of $[U > r]_1$ by $\mu \mapsto \mu^+$ is equal to
  one of the following sets.  If $U=X$ and $r<1$, this is the whole of
  $\Val_{\leq 1,\wk} X$.  If $U=X$ and $r\geq 1$, this is empty.
  Finally, if $U$ is a proper subset of $X$, hence does not contain
  $\bot$, then this is the set of all
  $\mu \in \Val_{\leq 1} (X \diff \dc \bot)$ such that $\mu^+ (U) >
  r$, where $\mu^+ (U) = \mu (U)$: hence this is $[U > r]_{\leq 1}$.
  In any case, that inverse image is open, so $\mu \mapsto \mu^+$ is
  continuous.
  
  For every open subset $U$ of $X \diff \dc\bot$, for every
  $r \in \Rp \diff \{0\}$, the inverse image of $[U > r]_{\leq 1}$ by
  $\nu \mapsto \nu^-$ is $[U > r]_1$.  Therefore $\nu \mapsto \nu^-$
  is continuous.
\end{proof}

Lemma~\ref{lemma:VwX:pointed} allows us to obtain the following
corollary to Theorem~\ref{thm:V:locfin}.
\begin{corollary}
  \label{corl:V1:fin}
  For every locally finitary compact, pointed space $X$,
  $\Val_{1, \wk} X$ is compact, locally finitary compact, and sober.
  In particular, it is a quasi-continuous dcpo, and the weak topology
  coincides with the Scott topology.

  The sets of the form $\interior {\upc E}$, where $E$ ranges over the
  finite non-empty sets of simple probability
  valuations form a base of the topology.  \qed
\end{corollary}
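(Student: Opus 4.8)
The plan is to transport Theorem~\ref{thm:V:locfin} across the homeomorphism furnished by Lemma~\ref{lemma:VwX:pointed}. First I would observe that $X \diff \dc\bot$ is locally finitary compact: it is an open subspace of $X$ (its open sets are exactly the proper open subsets of $X$), and local finitary compactness is inherited by open subspaces, since if $\interior{\upc E}$ is a basic open set of $X$ contained in a proper open set $W$, then $\upc E \subseteq W$, so $\upc E$ is a finitary compact subset of $X \diff \dc\bot$ and its interior computed in $X \diff \dc\bot$ still contains the relevant point. Applying Theorem~\ref{thm:V:locfin} to $Y \eqdef X \diff \dc\bot$ yields that $\Val_{\leq 1,\wk} Y = \Val_{\leq 1,\pw} Y$ is a compact, locally finitary compact, sober space, hence a quasi-continuous dcpo whose topology is the Scott topology, and that the sets $\interior{\upc E}$ with $E$ a finite non-empty set of simple subprobability valuations on $Y$ form a base.

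Next, by Lemma~\ref{lemma:VwX:pointed} the map $\nu \mapsto \nu^-$ is a homeomorphism $\Val_{1,\wk} X \to \Val_{\leq 1,\wk} Y$. All the properties in question — compactness, local finitary compactness, sobriety, being a quasi-continuous dcpo, and agreement of the weak and Scott topologies — are topological (the last because the Scott topology of the specialization order is determined by the topology, as recalled in Section~\ref{sec:bf-preliminaries}), so they transfer immediately to $\Val_{1,\wk} X$. This already gives the first paragraph of the statement.

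For the basis claim, I would push the base of $\Val_{\leq 1,\wk} Y$ forward along the inverse homeomorphism $\mu \mapsto \mu^+$. A basic open set $\interior{\upc E}$ of $\Val_{\leq 1,\wk} Y$, with $E = \{\mu_1,\dots,\mu_k\}$ a finite set of simple subprobability valuations on $Y$, maps to $\interior{\upc E'}$ where $E' = \{\mu_1^+,\dots,\mu_k^+\}$, using that a homeomorphism commutes with interior and with upward closure in the specialization order. The one point to check is that each $\mu_j^+$ is again a \emph{simple} valuation on $X$: if $\mu_j = \sum_{x \in F} a_x \delta_x$ with $F$ a finite subset of $Y$, then from the defining formula $\mu^+(U) = \mu(U \diff \dc\bot) + (1 - \mu(X \diff \dc\bot))\delta_\bot(U)$ one computes $\mu_j^+ = \sum_{x \in F} a_x \delta_x + \bigl(1 - \sum_{x \in F} a_x\bigr)\delta_\bot$, a simple probability valuation on $X$ (the Dirac masses $\delta_x$ for $x \in F \subseteq Y$ are unchanged, viewing $Y$ as a subspace of $X$). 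Hence the image base consists precisely of sets $\interior{\upc E'}$ with $E'$ a finite non-empty set of simple probability valuations on $X$, which is the desired base.

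The main obstacle is essentially bookkeeping rather than depth: one must be careful that "simple valuation on $Y$" really does correspond under $\mu \mapsto \mu^+$ to "simple probability valuation on $X$," i.e. that the extra atom at $\bot$ has a genuinely non-negative weight $1 - \mu(Y) \geq 0$ (which holds because $\mu$ is a subprobability valuation) and that the atoms of $\mu$ sitting in $Y$ are not disturbed when $\mu$ is re-read as a valuation on the larger space $X$. Everything else is a direct transport along the homeomorphism of Lemma~\ref{lemma:VwX:pointed}. \qed
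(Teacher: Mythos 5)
Your proposal is correct and follows exactly the route the paper intends: the corollary is stated with no written proof precisely because it is the transport of Theorem~\ref{thm:V:locfin}, applied to the locally finitary compact open subspace $X \diff \dc\bot$, along the homeomorphism of Lemma~\ref{lemma:VwX:pointed}. Your added details (open subspaces inherit local finitary compactness, and $\mu^+ = \sum_{x\in F} a_x\delta_x + (1-\sum_{x\in F} a_x)\delta_\bot$ is a simple probability valuation) are exactly the bookkeeping the paper leaves to the reader.
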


\section{The Scott and weak topologies may differ}
\label{sec:scott-weak-topol}

The Scott and weak topologies on $\Val_\ast X$ seem to agree in many
situations, and Alvarez-Manilla, Jung and Keimel asked whether they
agree on $\Val_{\leq 1} X$ for every stably compact space $X$
\cite[Section~5, second open problem]{AMJK:scs:prob}.  We show that
this is not the case.

Let $\alpha (\nat)$ be the one-point compactification of the discrete
space $\nat$.  Its elements are the natural numbers, plus a fresh
element $\infty$.  Its open subsets are the subsets of $\nat$ (not
containing $\infty$), plus all the subsets $\alpha (\nat) \diff E$,
where $E$ ranges over the finite subsets of $\nat$.  A \emph{discrete
  valuation} on $\alpha (\nat)$ is any valuation of the form
$\sum_{n \in \nat} a_n \delta_n + a_\infty \delta_\infty$, where each
$a_n$ and $a_\infty$ are in $\creal$.  They are all continuous
valuations.
\begin{lemma}
  \label{lemma:aN:discrete}
  Letting $\ast$ be ``$\leq 1$'' or ``$1$''.
  \begin{enumerate}[label=(\roman*)]
  \item The continuous valuations $\nu$ on $\alpha (\nat)$ are
    exactly the discrete valuations.
  \item The function
    $f \colon \Val_\ast (\alpha (\nat)) \to Y_\ast$ that maps
    $\sum_{n \in \nat} a_n \delta_n + a_\infty \delta_\infty$ to
    ${(a_x)}_{x \in \alpha (\nat)}$ is an order-isomorphism onto the
    poset $Y_\ast$ of families of non-negative real numbers whose sum
    is at most $1$ (if $\ast$ is ``$\leq 1$'') or exactly $1$ (if
    $\ast$ is ``$1$''), ordered pointwise.
  \item The set $\mathcal V$ of families
    ${(a_x)}_{x \in \alpha (\nat)}$ of $Y_\ast$ such that
    $a_\infty > 0$ is Scott-open in $Y_\ast$, but
    $f^{-1} (\mathcal V)$ does not contain any basic open neighborhood
    $\bigcap_{i=1}^n [U_i > r_i]_\ast$ of $\delta_\infty$.
  \end{enumerate}
\end{lemma}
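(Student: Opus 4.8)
The plan is to prove the three items in order, as each is essentially a routine but instructive computation.

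For (i), I would first check that every discrete valuation is continuous, which is immediate since directed suprema in $\Open(\alpha(\nat))$ are computed pointwise on the coefficients and sums commute with directed suprema of reals. For the converse, let $\nu$ be an arbitrary continuous valuation on $\alpha(\nat)$. Set $a_n \eqdef \nu(\{n\})$ for each $n \in \nat$ (note $\{n\}$ is open) and $a_\infty \eqdef \nu(\alpha(\nat)) - \sup_{E \in \Pfin(\nat)} \nu(E)$, which makes sense once we observe that $\nu$ is modular and strict, so $\nu(E) = \sum_{n \in E} a_n$ for every finite $E \subseteq \nat$, and hence $\sup_E \nu(E) = \sum_{n \in \nat} a_n \le \nu(\alpha(\nat))$. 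To see $\nu = \sum_n a_n \delta_n + a_\infty \delta_\infty$, evaluate both sides on an arbitrary open set: on a subset of $\nat$ it is a directed supremum of the finite case, and on a cofinite set $\alpha(\nat) \diff E$ one uses $\{\alpha(\nat) \diff E\} \cup \{F \mid F \in \Pfin(\nat)\}$ being directed with union $\alpha(\nat)$, plus modularity, to pin down the value. (One should take a little care that $a_\infty$ may be $\infty$; this does not cause trouble.)

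For (ii), the map $f$ is a bijection by (i) and by uniqueness of the coefficients (each $a_n = \nu(\{n\})$ is determined, and $a_\infty = \nu(\alpha(\nat)) - \sum_n a_n$), and monotonicity in both directions is a direct check: $\mu \le \nu$ in the stochastic order forces $a_n \le b_n$ for all $n$ by evaluating on $\{n\}$, and $a_\infty \le b_\infty$ by evaluating on cofinite sets and taking a limit; conversely pointwise domination of coefficients gives domination on all open sets since every open set's $\nu$-value is a sum of coefficients. The constraints $\nu(\alpha(\nat)) \le 1$ resp.\ $=1$ translate exactly to $\sum_{x} a_x \le 1$ resp.\ $=1$, which is the defining condition of $Y_\ast$.

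For (iii), that $\mathcal V = \{(a_x) \in Y_\ast \mid a_\infty > 0\}$ is Scott-open: it is clearly upward-closed, and if a directed family in $Y_\ast$ has supremum with $\infty$-component $>0$, then since the $\infty$-component of the supremum is the supremum of the $\infty$-components (suprema in $Y_\ast$ are pointwise), some member already has positive $\infty$-component. Now the heart of the matter — which I expect to be the only real obstacle, though it is a short one — is showing $f^{-1}(\mathcal V)$ contains no basic weak-open neighborhood of $\delta_\infty$. Suppose $\delta_\infty \in \bigcap_{i=1}^n [U_i > r_i]_\ast$; since $\delta_\infty(U_i) > r_i > 0$ we must have $\infty \in U_i$, so each $U_i$ is cofinite, say $U_i = \alpha(\nat) \diff E_i$ with $E_i$ finite. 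Pick any $m \in \nat \diff \bigcup_i E_i$ and consider $\delta_m$ (which lies in $Y_\ast$ in both cases). Then $\delta_m(U_i) = 1 > r_i$ for all $i$ (as $r_i < \delta_\infty(U_i) = 1$... more carefully, $r_i < 1$ because $r_i < \delta_\infty(U_i) \le 1$ and $\delta_\infty(U_i)=1$; if some $r_i \ge 1$ the neighborhood would already exclude $\delta_\infty$), so $f^{-1}(\delta_m)$ — i.e.\ the family with a single $1$ in coordinate $m$ — is in $\bigcap_i [U_i > r_i]_\ast$ but has $\infty$-component $0$, hence is not in $f^{-1}(\mathcal V)$. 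This contradiction completes the proof. In the ``$1$'' case $\delta_m \in \Val_1$ so the argument is unchanged; in the ``$\le 1$'' case it is a fortiori fine.
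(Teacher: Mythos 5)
Your proposal is correct in substance, and for item (iii) it is essentially the paper's own argument (each $U_i$ containing $\infty$ must be cofinite with $r_i<1$, pick $m$ outside all the $E_i$, and $\delta_m$ gives the contradiction). For items (i) and (ii) you take a genuinely different, more elementary route: you derive finite additivity from strictness and modularity, use Scott-continuity to evaluate $\nu$ on subsets of $\nat$ as sums of the atoms $a_n=\nu(\{n\})$, and pin down the mass at $\infty$ by subtraction; the paper instead invokes the extension theorem for LCS-complete spaces \cite{dBGLJL:LCS} to extend $\nu$ to a Borel measure on $\pow(\alpha(\nat))$ and reads off both the decomposition and the formula $a_\infty=\inf_n\nu(V_n)$ (used for monotonicity in (ii)) from $\sigma$-additivity. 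Your route is self-contained and avoids measure theory; the paper's buys brevity and handles the bookkeeping at $\infty$ for free. That bookkeeping is the one place where your sketch needs repair, since item (i) is stated for \emph{all} continuous valuations, not only bounded ones: the definition $a_\infty\eqdef\nu(\alpha(\nat))-\sup_E\nu(E)$ is of the form $\infty-\infty$ whenever $\sum_n a_n=\infty$, and the value is then not arbitrary --- for $\nu=\infty\cdot\delta_0+\delta_\infty$ the correct value is $a_\infty=1$. A clean fix is to set $a_\infty\eqdef\nu(\alpha(\nat)\diff E_0)-\sum_{n\notin E_0}a_n$ for some finite $E_0\subseteq\nat$ with finite tail sum $\sum_{n\notin E_0}a_n$ (the value is independent of $E_0$, by modularity on the disjoint opens involved), and $a_\infty\eqdef 0$ if no such $E_0$ exists, in which case every cofinite open already has infinite mass. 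Relatedly, the family $\{\alpha(\nat)\diff E\}\cup\Pfin(\nat)$ you invoke is not actually directed; what you want is simply modularity applied to the disjoint open sets $\alpha(\nat)\diff E$ and $E$. None of this affects the bounded cases $\ast$ being ``$\leq 1$'' or ``$1$'', which are all that the rest of the paper uses, and there your argument goes through as written.
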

\begin{proof}
  (i) Let $\nu$ be any continuous valuation over $\alpha (\nat)$.  We
  recall that every continuous valuation on an LCS-complete space
  extends to a measure on the Borel $\sigma$-algebra
  \cite[Theorem~1.1]{dBGLJL:LCS}.  Every locally compact sober space
  is $G_\delta$ in itself, hence LCS-complete.  Since every Hausdorff
  space is sober, and clearly locally compact, $\alpha (\nat)$ is
  LCS-complete, and therefore $\nu$ extends to a measure $\tilde\nu$
  on the Borel $\sigma$-algebra of $\alpha (\nat)$.  It is easy to see
  that the latter $\sigma$-algebra is the whole of $\pow (\nat)$.  We
  define $a_n \eqdef \tilde\nu (\{n\}) = \nu (\{n\})$, and
  $a_\infty \eqdef \tilde\nu (\{\infty\})$.  By $\sigma$-additivity,
  for every (necessarily measurable) subset $E$ of $\alpha (\nat)$,
  $\tilde\nu (E) = \sum_{x \in E} a_x$.  In particular, for every open
  subset $U$ of $\alpha (\nat)$,
  $\nu (U) = \sum_{x \in U} a_x = (\sum_{n \in \nat} a_n \delta_n +
  a_\infty \delta_\infty) (U)$.

  (ii) Let $\nu$ be any element of $\Val_\ast (\alpha (\nat))$, and
  $\tilde\nu$ be a measure that extends $\nu$ to the Borel
  $\sigma$-algebra.  In a more precise way as in the statement of the
  lemma, we define $f (\nu)$ as ${(a_x)}_{x \in \alpha (\nat)}$, as
  given in item~(i), so that
  $\nu = \sum_{n \in \nat} a_n \delta_n + a_\infty \delta_\infty$.
  This defines a bijection $f$ of $\Val_\ast (\alpha (\nat))$ onto
  $Y_\ast$.

  Since $\{\infty\} = \bigcap_{n \in \nat} V_n$, where $V_n$ is the
  open set $\{n, n+1, \cdots, \infty\}$, and since $\tilde\nu$ is a
  bounded measure,
  $a_\infty = \tilde\nu (\{\infty\}) = \inf_{n \in \nat} \tilde\nu
  (V_n) = \inf_{n \in \nat} \nu (V_n)$.  This implies that $a_\infty$
  grows as $\nu$ grows.  It is clear that $a_n = \nu (\{n\})$ grows,
  too, as $\nu$ grows.  Therefore $f$ is monotonic, and its inverse is
  clearly monotonic as well.  (This discussion is superfluous when $\ast$ is
  ``$1$'', by the way, since in that case the ordering on
  $\Val_1 (\alpha (\nat))$ and on $Y_1$ is just equality.)

  (iii) $\mathcal V$ is clearly Scott-open in $Y_\ast$.  We now
  imagine that $f^{-1} (\mathcal V)$ contains a basic open
  neighborhood $\bigcap_{i=1}^n [U_i > r_i]_\ast$ of $\delta_\infty$,
  where each $U_i$ is open in $\alpha (\nat)$, and
  $r_i \in \Rp \diff \{0\}$.  Since
  $\delta_\infty \in [U_i > r_i]_\ast$, $U_i$ must contain $\infty$
  (and $r_i < 1$), so $U_i = \alpha (\nat) \diff E_i$ for some finite
  subset $E_i$ of $\nat$.  Let $n$ be a natural number that is not in
  any of the finite sets $E_i$, $1\leq i\leq n$.  Then
  $\delta_n (U_i) = 1 > r_i$, so $\delta_n$ is in
  $\bigcap_{i=1}^n [U_i > r_i]_\ast$, hence in $f^{-1} (\mathcal V)$.
  However, $f (\delta_n)$ is the family
  ${(a_x)}_{x \in \alpha (\nat)}$ such that $a_x=0$ for every
  $x \in \alpha (\nat)$ except for $a_n=1$; in particular,
  $a_\infty=0$, showing that $f (\delta_n)$ is not in $\mathcal V$:
  contradiction.
\end{proof}

\begin{theorem}
  Let $\ast$ be nothing, ``$\leq 1$'' or ``$1$''.  The Scott topology
  on $\Val_\ast (\alpha (\nat))$ is strictly finer than the weak topology.
\end{theorem}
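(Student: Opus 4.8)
The plan is to reduce the statement to the concrete computation already carried out in Lemma~\ref{lemma:aN:discrete}. The essential point is that the weak topology and the Scott topology are both topologies on the same underlying set $\Val_\ast(\alpha(\nat))$, that the weak topology is always coarser than the Scott topology (this is recalled in Section~\ref{sec:bf-preliminaries}), so it suffices to exhibit a single set that is Scott-open but not weakly open. Lemma~\ref{lemma:aN:discrete} hands us exactly such a set.

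First I would dispose of the cases $\ast$ equal to ``$\leq 1$'' or ``$1$''. In both cases, transport the set $\mathcal V$ of Lemma~\ref{lemma:aN:discrete}(iii) through the order-isomorphism $f$ of part~(ii): since $f$ is an order-isomorphism, it is in particular a homeomorphism for the Scott topologies of the two posets, so $f^{-1}(\mathcal V)$ is Scott-open in $\Val_\ast(\alpha(\nat))$. On the other hand, $f^{-1}(\mathcal V)$ is a neighborhood of $\delta_\infty$ (indeed $f(\delta_\infty)$ has $a_\infty = 1 > 0$), yet by part~(iii) it contains no basic weak-open neighborhood $\bigcap_{i=1}^n [U_i > r_i]_\ast$ of $\delta_\infty$; hence $f^{-1}(\mathcal V)$ is not weakly open. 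So the two topologies differ, and since weak is coarser than Scott, the Scott topology is strictly finer.

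For $\ast$ nothing, I would reduce to the case $\ast = {}$``$\leq 1$''. Observe that $\Val_{\leq 1}(\alpha(\nat))$ sits inside $\Val(\alpha(\nat))$ as the down-set of the valuation $\delta_\infty \cdot$ — more precisely, it is a Scott-closed subset of $\Val(\alpha(\nat))$ (it is the complement of the Scott-open set $[X>1]$, which is also weakly open), and the subspace Scott topology induced on $\Val_{\leq 1}(\alpha(\nat))$ from $\Val(\alpha(\nat))$ coincides with its own Scott topology, while the subspace weak topology likewise coincides with the weak topology on $\Val_{\leq 1}(\alpha(\nat))$ (the subbasic sets $[U>r]$ restrict to $[U>r]_{\leq 1}$). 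Since we have just seen that the Scott and weak topologies differ on $\Val_{\leq 1}(\alpha(\nat))$, they must already differ on $\Val(\alpha(\nat))$: a Scott-open set of $\Val(\alpha(\nat))$ whose trace on $\Val_{\leq 1}(\alpha(\nat))$ is the witnessing set above (for instance, the upward closure in $\Val(\alpha(\nat))$, under the Scott topology, of that witness, or simply any Scott-open $\mathcal W$ with $\mathcal W \cap \Val_{\leq 1}(\alpha(\nat)) = f^{-1}(\mathcal V)$) cannot be weakly open, else its trace would be weakly open in the subspace.

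The main obstacle is the last paragraph: one must be slightly careful that a Scott-open set of the whole space $\Val(\alpha(\nat))$ really does restrict to the chosen witness and is genuinely not weakly open. The clean way is to note that $\{\nu \in \Val(\alpha(\nat)) \mid \nu(\alpha(\nat)) \le 1,\ a_\infty(\nu) > 0\}$ — equivalently $\{\nu \mid \nu(\alpha(\nat)) \le 1,\ \inf_n \nu(V_n) > 0\}$ in the notation of the proof of Lemma~\ref{lemma:aN:discrete}(ii), where $V_n = \{n, n+1, \ldots, \infty\}$ — is itself Scott-open in $\Val(\alpha(\nat))$ (it is the intersection of the Scott-open set $\{\inf_n \nu(V_n) > 0\}$ with the Scott-closed set $[X \le 1]$... which is not open, so instead) — the robust choice is to take the Scott-open set $f^{-1}(\mathcal V)$ inside $\Val_{\leq 1}$ and appeal directly to the fact that a subspace of a space cannot have strictly finer-than-weak structure if the ambient space does not; concretely, if the Scott and weak topologies agreed on $\Val(\alpha(\nat))$, they would agree on every subspace, contradicting the established case. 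This makes the reduction immediate and sidesteps any delicate extension argument.
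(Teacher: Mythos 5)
Your proof is correct and follows essentially the same route as the paper: Lemma~\ref{lemma:aN:discrete}(iii) supplies the Scott-open, non-weakly-open witness $f^{-1}(\mathcal V)$ for the cases ``$\leq 1$'' and ``$1$'', and the case where $\ast$ is nothing is handled exactly as in the paper, via the Scott-closedness of $\Val_{\leq 1}(\alpha(\nat))$ in $\Val(\alpha(\nat))$, which makes the subspace Scott topology coincide with the intrinsic one, together with the fact that the subspace weak topology is the intrinsic weak topology. The aborted detour trying to build an explicit Scott-open subset of $\Val(\alpha(\nat))$ restricting to the witness is unnecessary; the contradiction argument you settle on at the end is precisely the paper's.
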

\begin{proof}
  We recall that the Scott topology on any space of the form
  $\Val_\ast X$ is always finer than the weak topology.
  
  When $\ast$ is ``$\leq 1$'' or ``$1$'', this is
  Lemma~\ref{lemma:aN:discrete}, item~(iii): $f^{-1} (\mathcal V)$ is
  a Scott-open neighborhood of $\delta_\infty$ in
  $\Val_\ast (\alpha (\nat))$ that is not open in the weak topology.

  When $\ast$ is nothing, we notice that
  $\Val_{\leq 1} (\alpha (\nat))$ is Scott-closed in
  $\Val (\alpha (\nat))$.  This easily implies that the Scott topology
  on $\Val_{\leq 1} (\alpha (\nat))$ is the subspace topology induced
  by the Scott topology on $\Val (\alpha (\nat))$.  If the latter
  agreed with the weak topology, then the Scott topology on
  $\Val_{\leq 1} (\alpha (\nat))$ would be the subspace topology
  induced by the inclusion in $\Val_\wk (\alpha (\nat))$.  But the
  latter is just the weak topology on $\Val_{\leq 1} (\alpha (\nat))$,
  and we have just seen that it differs from the Scott topology.
\end{proof}
The gap between the Scott and weak topologies on
$\Val_1 (\alpha (\nat))$ is really enormous.  By Corollary~37 of
\cite{AMJK:scs:prob}, $\Val_{\leq 1} X$ and $\Val_1 X$ are stably
compact for any stably compact space $X$.  This applies to
$X \eqdef \alpha (\nat)$, since every compact Hausdorff space is
stably compact.  One checks easily (e.g., by using
Lemma~\ref{lemma:aN:discrete}, item~(ii)) that the stochastic ordering
on $\Val_1 (\alpha (\nat))$ is simply equality, hence that the Scott
topology is the discrete topology.  But the only discrete spaces that
are (stably) compact are finite, and $\Val_1 (\alpha (\nat))$ is far
from finite.

The coincidence of the Scott and weak topologies of
Theorem~\ref{thm:V:locfin}, and first observed by Kirch in the case
where $X$ is a continuous dcpo, is probably exceptional.  We leave
open the question of the exact characterization of those spaces $X$
for which the weak and Scott topologies agree on $\Val_\ast X$.

\section*{Acknowledgments}
\label{sec:acknowledgments}

My deepest thanks to Xiaodong Jia, who found a mistake in a previous
version of Lemma~\ref{lemma:mincred}, and another one in a previous
version of Lemma~\ref{lemma:V:Q}.


\bibliographystyle{plain}
\ifarxiv

\else
\bibliography{qcontval}
\fi

\end{document}